\newcommand{\Ker}{\operatorname{Ker}}
\newcommand{\Hom}{\operatorname{Hom}}
\newcommand{\Ext}{\operatorname{Ext}}
\newcommand{\Char}{\operatorname{char}}
\newcommand{\Image}{\operatorname{Im}}
\newtheorem{proposition}{Proposition}[section]
\newtheorem{lemma}[proposition]{Lemma}
\newtheorem{corollary}[proposition]{Corollary}
\newtheorem{theorem}[proposition]{Theorem}
\theoremstyle{definition}
\newtheorem{definition}[proposition]{Definition}
\newtheorem{example}[proposition]{Example}
\newtheorem{remark}[proposition]{Remark}
\patchcmd{\@settitle}{\uppercasenonmath\@title}{}{}{}
\patchcmd{\@setauthors}{\MakeUppercase}{}{}{}
\begin{document}

\title[Splitting in a complete local ring]{Splitting in a complete local ring and decomposition its group of units}

\author[A. Tarizadeh]{Abolfazl Tarizadeh}
\address{Department of Mathematics, Faculty of Basic Sciences, University of Maragheh, Maragheh, East Azerbaijan Province, Iran.}
\email{ebulfez1978@gmail.com}

\date{}
\subjclass[2020]{13B35, 13J10, 12F20, 12F10, 13A35, 13B40, 13B10, 13A15, 13M05, 13D03}
\keywords{complete local ring; the group of units; splitting; characteristic $p$ methods; (separating) transcendence basis; formally étale ring map; formally smooth ring map; Ext groups; pullbacks in the category of (commutative) rings}

\begin{abstract} Let $(R,M,k)$ be a complete local ring (not necessarily Noetherian). As the first main result of this article, we prove that in the unequal characteristic case  $\Char(R)\neq\Char(k)$, the natural surjective map between the groups of units $R^{\ast}\rightarrow k^{\ast}$ admits a splitting. Next, we reprove by a new method that $R$ is equi-characteristic, i.e., $\Char(R)=\Char(k)$ if and only if the natural surjective ring map $R\rightarrow k$ admits a splitting, or equivalently, $R$ has a coefficient field. In our proof there is no need for the existence of the coefficient fields for equi-characteristic complete local rings, whose existence is the most difficult part of the known proof. This is one of the main contributions of the article. \\  
As an application, we show that for any complete local ring $(R,M,k)$ the following short exact sequence of Abelian groups: $$\xymatrix{1\ar[r]&1+M\ar[r]& R^{\ast}\ar[r]&k^{\ast}
\ar[r]&1}$$ is always split. In particular, we have an isomorphism of Abelian groups $R^{\ast}\simeq(1+M)\times k^{\ast}$. We also show with an example that the above exact sequence does not split for many incomplete local rings.  
\end{abstract}

\maketitle

\section{Introduction}

In this article, we first show that for any ``finite'' local ring $(R,M,k)$ the following short exact sequence of Abelian groups: $$\xymatrix{1\ar[r]&1+M\ar[r]& R^{\ast}\ar[r]&k^{\ast}
\ar[r]&1}$$ is split. Next, this observation naturally led us to the question of whether the same splitting holds for any ``infinite" local ring. 

Then, after exploring several stages of the level of generalization, first for Artinian local rings, then for local rings whose Jacobson radical are nilpotent, finally as one of the main results of this investigation, we succeeded to show that the same splitting holds for any complete local ring. However, the proof of this general case is technical, and uses the tools and methods of commutative algebra, the theory of fields, and homological algebra. In fact, we prove the following result: 

\begin{theorem}\label{Theorem A} For a complete local ring $(R,M,k)$ the following statements are equivalent: \\
$\mathbf{(i)}$ $R$ has a coefficient field. \\
$\mathbf{(ii)}$ The natural surjective ring map $R\rightarrow k$ admits a splitting. \\
$\mathbf{(iii)}$ $R$ is equi-characteristic, i.e.,  $\Char(R)=\Char(k)$. \\
Furthermore, in the unequal characteristic case  $\Char(R)\neq\Char(k)$, the natural surjective map between the groups of units $R^{\ast}\rightarrow k^{\ast}$ admits a splitting.
\end{theorem}

Note that the first part of this theorem is essentially a reformulation of the most important version of Cohen's structure theorem, but the second part is a new result. The existence of coefficient fields for equi-characteristic complete local rings is the most difficult part of the known proof (see e.g. \cite[Theorem 28.3(ii)]{Matsumura}). However, our proof is quite natural and does not require the existence of the coefficient fields. In fact, unlike the classical route, we prove the splitting $R\rightarrow k$ directly, then the image of that splitting is automatically a coefficient field.  

The characteristic of the residue field $k=R/M$ plays an important role in our proof and hence it is necessary to examine it carefully. We know that in any local ring $(R,M,k)$, if $\Char(R)=\Char(k)$ then there are two cases: $\Char(k)=0$ or $\Char(k)=p$  for some prime number $p$. However, if $\Char(R)\neq\Char(k)$, then the characteristic of $k$ is always a prime number $p$. 

To prove the above theorem in the equi-characteristic case, one of the main ingredients that applied is the following important result:

\begin{lemma}\label{Lemma B} Let $k/F$ be an extension of fields with $F$ a perfect field. If $f:A\rightarrow k$ is a surjection of $F$-algebras with nilpotent kernel, then there is a ring section $g:k\rightarrow A$.   
\end{lemma}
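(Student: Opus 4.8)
The plan is to identify the section problem with an infinitesimal lifting property and to resolve it via the formal smoothness of $k$ over $F$. The crucial input is a classical fact from the theory of fields: since $F$ is perfect, every field extension $k/F$ is separable (geometrically reduced). In characteristic $0$ this is automatic, while in characteristic $p$ it holds because $F^{1/p}=F$, so that $k\otimes_F F^{1/p}\cong k$ is reduced. One then invokes the theorem (Grothendieck; cf. Matsumura) that an extension $k/F$ is separable if and only if $k$ is formally smooth over $F$ for the discrete topologies. This is the step I expect to be the true engine of the argument, and the one I would either cite or, if a self-contained treatment is wanted, reprove through the vanishing of the relevant lifting obstruction (for a square-zero thickening the obstruction lives in an $\Ext^1$/derivation group that vanishes precisely when $k/F$ is separable).

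Granting formal smoothness, the section is produced by a standard dévissage along the nilpotent kernel $I=\Ker f$. Writing $I^n=0$, I would consider the tower of square-zero quotients
$$A=A/I^n\twoheadrightarrow A/I^{n-1}\twoheadrightarrow\cdots\twoheadrightarrow A/I^2\twoheadrightarrow A/I\cong k,$$
where each arrow $A/I^{j+1}\to A/I^{j}$ has kernel $I^{j}/I^{j+1}$ of square zero (indeed $(I^{j})^2=I^{2j}\subseteq I^{j+1}$ for $j\geq 1$). Starting from the identity map $k\to A/I=k$ and applying the infinitesimal lifting property one step at a time, I would lift successively to $F$-algebra maps $k\to A/I^{2},\ k\to A/I^{3},\ \dots,\ k\to A/I^{n}=A$, each compatible with the projection to the preceding quotient. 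The terminal map $g\colon k\to A$ is then an $F$-algebra homomorphism whose composite $f\circ g$ reduces modulo $I$ to the identity; since $f$ is exactly the projection $A\to A/I=k$, this forces $f\circ g=\mathrm{id}_k$, giving the desired ring section.

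An alternative, more hands-on strategy would split off a separating transcendence basis of $k/F$, lift it freely (possible because $f$ is surjective), and then treat the separable algebraic part by idempotent/Hensel-type lifting across the nilpotent ideal. This works cleanly when $k/F$ is finitely or countably generated, but for an arbitrary extension a separable extension need not be separably generated, so such an argument requires a transfinite limit — which is precisely the content that formal smoothness packages once and for all. For this reason I would commit to the formal-smoothness route: the sole nontrivial ingredient is the equivalence \emph{separable $\iff$ formally smooth}, and the perfectness of $F$ enters only to guarantee separability of the possibly transcendental extension $k/F$.
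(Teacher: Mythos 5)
Your proposal follows essentially the same route as the paper: perfectness of $F$ gives separability of $k/F$ (you use MacLane's criterion via $k\otimes_F F^{1/p}$, the paper uses separating transcendence bases of finitely generated subextensions — both classical and equivalent), separability gives formal smoothness by the Grothendieck/Matsumura theorem, and the section is then obtained by lifting the identity of $k$ across a square-zero dévissage of the nilpotent kernel (your tower uses consecutive powers $I^{j}/I^{j+1}$, the paper's Remark on formal smoothness uses doubling exponents $I, I^2, I^4, \dots$; this is immaterial). The argument is correct and matches the paper's proof of this lemma (stated there as Corollary on formally smooth extensions of perfect fields).
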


In an attempt to prove the above lemma, we first obtained the following characterization for separability:

\begin{lemma} An algebraic extension of fields is separable if and only if it is formally étale. In particular, in the characteristic zero, every algebraic extension of fields is formally étale.
\end{lemma}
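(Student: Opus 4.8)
The plan is to prove the equivalence by reducing to finite extensions and passing to filtered colimits, treating the two implications separately. Recall that $F\to k$ is formally étale iff for every $F$-algebra $C$ and every square-zero ideal $I\subseteq C$ each $F$-algebra map $k\to C/I$ lifts \emph{uniquely} to an $F$-algebra map $k\to C$ (the case of a general nilpotent $I$ then follows by filtering through the powers of $I$). Two standing remarks drive everything. First, every algebraic extension is the filtered union of its finite subextensions, and by the primitive element theorem each finite separable subextension has the form $F[x]/(f)$ with $f$ separable irreducible. Second, a filtered colimit of formally étale $F$-algebras is again formally étale, because the \emph{uniqueness} of the infinitesimal lifts on the pieces forces them to be compatible and hence to glue into a unique lift on the colimit.

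For the implication separable $\Rightarrow$ formally étale, I would first settle the finite separable case $k=F[x]/(f)$. Here $f'(\alpha)$ is a unit of $k$ by separability, so given $(C,I)$ with $I$ square-zero and an $F$-algebra map $\bar u\colon k\to C/I$ sending $\alpha$ to $\bar c$, any lift $c_0\in C$ of $\bar c$ satisfies $f(c_0)\in I$ while $f'(c_0)$ is a unit of $C$ (an element that is a unit modulo a nilpotent ideal is a unit). A single Newton step $c=c_0-f(c_0)f'(c_0)^{-1}$ then yields a genuine root $f(c)=0$ with $c\equiv c_0\pmod I$, since the error term lies in $I^2=0$; this produces an $F$-algebra lift $u\colon k\to C$. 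A Taylor expansion of $f$ shows that two lifts congruent modulo the square-zero $I$ must coincide, giving uniqueness. Thus every finite separable extension is formally étale, and by the colimit remark so is every separable algebraic extension.

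For the converse I would argue by contraposition: an inseparable algebraic extension is not formally étale. Since formally étale implies formally smooth, it suffices to show that an inseparable extension fails to be formally smooth, i.e.\ that some square-zero extension of $k$ admits no $F$-algebra section. The model obstruction is the purely inseparable one: if $\gamma\in k\setminus F$ with $\gamma^{p}=a\in F$ (so $\Char(k)=p$), then in $C=F[x]/\bigl((x^{p}-a)^{2}\bigr)$ the square-zero surjection $C\to F[x]/(x^{p}-a)=F(\gamma)$ has no section, because any putative image $\xi=x+(x^{p}-a)h(x)$ of $\gamma$ satisfies $\xi^{p}=x^{p}\neq a$ in $C$. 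To reach an arbitrary inseparable $k$, I would write $F\subseteq k_{s}\subseteq k$ with $k_{s}/F$ separable (hence formally étale by the first part) and $k/k_{s}$ nontrivial purely inseparable; the formal étaleness of $F\to k_{s}$ together with the \emph{assumed} smoothness of $F\to k$ upgrades an $F$-linear lift into a $k_{s}$-linear one (by uniqueness of lifts over $F$), forcing $k_{s}\to k$ to be formally smooth and contradicting the purely inseparable obstruction. Finally, the ``in particular'' clause is immediate, since in characteristic zero every algebraic extension is separable and hence formally étale.

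The genuinely delicate point is the converse, specifically the failure of smoothness. Formal unramifiedness (that is, $\Omega_{k/F}=0$) is \emph{not} enough here: the algebraic extension $\mathbb{F}_{p}(u^{1/p^{\infty}})/\mathbb{F}_{p}(u)$ has vanishing module of differentials yet is purely inseparable, so one must genuinely exhibit the failure of the \emph{existence} of a lift. The technical heart is therefore globalizing the explicit $x^{p}-a$ obstruction from a single purely inseparable generator to the whole extension; this is exactly the classical fact that a formally smooth field extension is separable, to which one may alternatively appeal directly if a fully self-contained treatment of the imperfection obstruction is not desired.
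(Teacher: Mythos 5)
Your overall skeleton coincides with the paper's: for ``separable $\Rightarrow$ formally \'etale'' both you and the paper write the extension as a filtered colimit of finite separable subextensions and invoke the stability of formal \'etaleness under filtered colimits (the paper's Lemma \ref{Lemma 9 nine}); for the converse both ultimately rest on the fact that a formally smooth field extension is separable. The genuine added value in your version is the self-contained Newton-step proof that $F[x]/(f)$ is formally \'etale for $f$ separable (the paper simply cites \cite[Tags 00U3 and 00UR]{Johan} or \cite{Ford} here), and that argument is correct: $f'(c_0)$ is a unit because it is a unit modulo the nilpotent ideal, the error term of the Newton step dies in $I^2=0$, and uniqueness follows from invertibility of $f'(c)$ against $(c'-c)^2=0$. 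Your emphasis that it is the \emph{uniqueness} of lifts that makes the colimit argument work is exactly the point of the paper's Lemma \ref{Lemma 9 nine}.

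The one place where your write-up has a real gap is the attempted self-contained converse. Your explicit obstruction in $C=k_s[x]/\bigl((x^{p}-a)^{2}\bigr)$ correctly shows that $k_{s}\to k_{s}(\gamma)$ is not formally smooth, and your reduction from $F\to k$ to $k_{s}\to k$ (using uniqueness of lifts over the formally \'etale $F\to k_{s}$ to promote an $F$-algebra lift to a $k_{s}$-algebra lift) is also correct. But the final step --- ``forcing $k_{s}\to k$ to be formally smooth and contradicting the purely inseparable obstruction'' --- silently assumes that formal smoothness of $k_{s}\to k$ implies formal smoothness of the subextension $k_{s}\to k_{s}(\gamma)$, or equivalently that your square-zero extension of $k_{s}(\gamma)$ can be enlarged to one of $k$ with no section. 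Neither is automatic: formal smoothness is not a priori inherited by subextensions, and the pushout of a square-zero extension along $k_{s}(\gamma)\hookrightarrow k$ cannot be formed naively because $C$ is not a $k_{s}(\gamma)$-algebra. Closing this is precisely the content of the classical theorem (formally smooth $\Rightarrow$ separable, \cite[Theorem 26.9]{Matsumura} or \cite[Tag 031Y]{Johan}), so your fallback of citing it directly is not an optional convenience but is actually needed; with that citation your converse becomes the same argument as the paper's, which deduces separability in the sense of Definition \ref{Def 1} from \cite[Tag 031Y]{Johan} and then descends to ordinary separability of algebraic extensions via Remark \ref{Remark 4}.
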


Then the following result paves the way for proving Lemma \ref{Lemma B}, especially in the positive characteristic case. 

\begin{lemma} Every field extension over a perfect field is formally smooth. 
\end{lemma}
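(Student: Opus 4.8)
The plan is to reduce the assertion to the case of a finitely generated extension and then to build $k$ out of a short tower of maps each of which is already known to be formally smooth. Throughout I use the standard reductions for formal smoothness: it suffices to test the lifting property against square-zero extensions $A\rightarrow A/I$, the class of formally smooth $F$-algebras is closed under composition, and a localization is formally étale (hence formally smooth) over the ring being localized. I also use the elementary fact that a polynomial algebra $F[x_1,\dots,x_n]$ is formally smooth over $F$, since one lifts a prospective map simply by choosing arbitrary preimages for the images of the variables.

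First I would dispose of the finitely generated case. Because $F$ is perfect we have $F^{1/p}=F$ in positive characteristic (and there is nothing to check when $\Char(F)=0$), so by MacLane's criterion the extension $k/F$ is separable; consequently every finitely generated subextension $E/F$ is separable and therefore separably generated, i.e.\ it admits a separating transcendence basis $x_1,\dots,x_n$. This yields a tower $F\subseteq F(x_1,\dots,x_n)\subseteq E$ in which $E$ is a finite separable algebraic extension of the rational function field $F(x_1,\dots,x_n)$. Now $F(x_1,\dots,x_n)$ is formally smooth over $F$: it is the localization $\operatorname{Frac}\bigl(F[x_1,\dots,x_n]\bigr)$ of the (formally smooth) polynomial ring, and localization is formally étale, so formal smoothness is inherited through the composite. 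The top floor $E/F(x_1,\dots,x_n)$, being separable algebraic, is formally étale by the preceding lemma, hence formally smooth. Composing the two floors shows that every finitely generated subextension $E/F$ is formally smooth.

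Finally I would pass to the general extension by writing $k$ as the filtered colimit (directed union) of its finitely generated subextensions over $F$, each of which is formally smooth by the previous paragraph, and invoking the fact that a filtered colimit of formally smooth $F$-algebras is again formally smooth.

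I expect this last step to be the main obstacle. When one tests the lifting property against a square-zero extension $A\rightarrow A/I$, formal smoothness of each finitely generated piece $k_i$ produces a lift $k_i\rightarrow A$, but these lifts need not be compatible across the directed system, since two lifts of a fixed $k_i$ differ by an element of $\operatorname{Der}_F(k_i,I)$. The work is therefore in organizing a single compatible choice of lifts along the whole system (a Zorn's-lemma argument over partial compatible families, exploiting that the indeterminacy is controlled by derivations into $I$); this is precisely the colimit stability of formal smoothness, which I would either establish in this way or quote as a known result.
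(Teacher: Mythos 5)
Your reduction to the finitely generated case is sound and is in fact a nice self-contained argument: over a perfect field every finitely generated subextension $E/F$ admits a separating transcendence basis (MacLane), and the tower $F\subseteq F(x_1,\dots,x_n)\subseteq E$ is formally smooth because polynomial rings are formally smooth over $F$, localizations are formally \'etale, finite separable extensions are formally \'etale, and these properties compose. The paper reaches the same intermediate conclusion but then simply observes that ``every finitely generated subextension is separably generated'' is precisely the definition of separability of an arbitrary extension (Definition \ref{Def 1}) and quotes the theorem that separable field extensions are formally smooth (\cite[Tag 0320]{Johan}, \cite[\S19, Theorem 19.6.1]{Grothendieck}, or Theorems 26.3 and 26.9 of \cite{Matsumura}).

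The genuine gap is exactly where you suspected it: the passage from the finitely generated subextensions to all of $k$. ``A filtered colimit of formally smooth $F$-algebras is formally smooth'' is not a quotable known result, and the paper itself is careful to prove colimit stability only for formally \emph{\'etale} maps (Lemma \ref{Lemma 9 nine}), where uniqueness of the lifts forces the family of local lifts to be compatible. For formally smooth maps uniqueness fails, and your proposed repair does not go through as described: the set of lifts of $k_i\rightarrow A/I$ is a torsor under $\mathrm{Der}_F(k_i,I)$, and the restriction maps between these torsors need not be surjective, because a derivation on a smaller subextension need not extend to a larger one. Concretely, take $F=\mathbb{F}_p$ and $k=\mathbb{F}_p(t)$: both $\mathbb{F}_p(t^p)$ and $\mathbb{F}_p(t)$ occur in your directed system and each is separably generated (hence formally smooth) over $\mathbb{F}_p$, yet the derivation $d/d(t^p)$ of $\mathbb{F}_p(t^p)$ extends to no derivation $D$ of $\mathbb{F}_p(t)$, since one would need $1=D(t^p)=pt^{p-1}D(t)=0$. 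So a maximal partially compatible family in your Zorn argument can genuinely get stuck, and the Mittag--Leffler-type condition needed to realize the inverse limit of lift-torsors is exactly what is missing. The statement you want is nevertheless true for field extensions, but for a reason special to fields: formal smoothness is equivalent to the vanishing of $H_1$ of the cotangent complex together with projectivity of $\Omega_{k/F}$; the $H_1$ condition does pass through the filtered colimit, and projectivity is automatic because every module over the field $k$ is free. Closing your last step therefore requires essentially the same machinery as the theorem the paper cites, rather than a general colimit stability of formal smoothness.
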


To prove Theorem \ref{Theorem A} in the unequal characteristic case, one of the main tools which used is a key result in homological algebra on the vanishing of Ext groups: 

\begin{corollary} Let $R$ be a PID and $M,N$ be modules over $R$. If there is an element $r\in R$ such that $r$ is regular on $M$ and $rN=0$, then $\Ext_{R}^{1}(M,N)=0$.
\end{corollary}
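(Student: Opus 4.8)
The plan is to exploit the regularity of $r$ on $M$ to manufacture a short exact sequence, and then to play the two $R$-module structures on $\Ext^1_R(M,N)$ against each other. Since $r$ is regular on $M$, multiplication by $r$ is an injective endomorphism of $M$, so there is a short exact sequence
$$0\longrightarrow M\xrightarrow{\ \cdot r\ } M\longrightarrow M/rM\longrightarrow 0.$$
First I would apply the contravariant functor $\Hom_R(-,N)$ to this sequence and pass to the associated long exact sequence in $\Ext_R$.

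This long exact sequence contains the segment
$$\Ext^1_R(M,N)\xrightarrow{\ r_{*}\ }\Ext^1_R(M,N)\longrightarrow\Ext^2_R(M/rM,N),$$
where $r_{*}$ is the map induced by multiplication by $r$ on the first argument. Two standard facts now do the work. Because $R$ is a PID it is hereditary, so its global dimension is at most $1$ and $\Ext^2_R(-,-)$ vanishes identically; in particular $\Ext^2_R(M/rM,N)=0$. Hence $r_{*}$ is surjective. On the other hand, by the $R$-bilinearity of $\Ext$, the map $r_{*}$ is nothing but multiplication by the scalar $r$ on the $R$-module $\Ext^1_R(M,N)$.

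The final step is to observe that this same scalar multiplication can be computed through the second argument: multiplication by $r$ on $\Ext^1_R(M,N)$ agrees with the map induced by multiplication by $r$ on $N$. Since $rN=0$, that induced map is zero, so multiplication by $r$ on $\Ext^1_R(M,N)$ is the zero map. Being simultaneously surjective and zero, it forces $\Ext^1_R(M,N)=0$. The only point requiring care, and what I expect to be the crux, is the identification of $r_{*}$ with genuine scalar multiplication together with the matching of the two ways of computing it through either variable; this is exactly the bifunctorial $R$-linearity of $\Ext$, after which the vanishing is automatic.
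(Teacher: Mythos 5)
Your proof is correct and follows essentially the same route as the paper: the paper's Lemma 3.15 uses the exact segment $\Ext^1_R(M,N)\xrightarrow{\;r\;}\Ext^1_R(M,N)\rightarrow\Ext^2_R(M/rM,N)$ coming from $0\rightarrow M\xrightarrow{\,\cdot r\,}M\rightarrow M/rM\rightarrow 0$, identifies the first map as multiplication by $r$ (which vanishes because $rN=0$), and the corollary then invokes $\Ext^i_R=0$ for $i\geqslant 2$ over a PID, exactly as you do. The only cosmetic difference is that the paper packages the bifunctorial identification of $r_{*}$ with scalar multiplication as a citation to the Stacks Project rather than arguing it inline.
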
 

Other results applied in the proof of Theorem \ref{Theorem A} are Lemmas \ref{Lemma 1} and \ref{Lemma 2}. As an immediate application of this theorem, the desired splitting is deduced:

\begin{corollary} Let $(R,M,k)$ be a complete local ring. Then the following exact sequence of Abelian groups: $$\xymatrix{1\ar[r]&1+M\ar[r]& R^{\ast}\ar[r]&k^{\ast}
\ar[r]&1}$$ is split. 
\end{corollary}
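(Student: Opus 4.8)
The plan is to observe that splitting the displayed short exact sequence of abelian groups amounts precisely to constructing a group homomorphism $s\colon k^{\ast}\to R^{\ast}$ that sections the natural surjection $\pi\colon R^{\ast}\to k^{\ast}$ induced by $R\to k$; once such an $s$ exists the sequence splits and $R^{\ast}\cong(1+M)\times k^{\ast}$, since all the groups in sight are abelian. Before producing $s$ I would briefly record why the sequence is exact in the first place: in a local ring an element is a unit exactly when it lies outside $M$, so every $1+m$ with $m\in M$ is a unit and the kernel of $\pi$ is exactly $1+M$, while $\pi$ is surjective because any lift to $R$ of a nonzero residue avoids $M$ and is therefore a unit. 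The whole point is that the two cases of Theorem \ref{Theorem A} are mutually exclusive, exhaust all complete local rings, and each one yields the required section.

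In the case $\Char(R)=\Char(k)$, Theorem \ref{Theorem A} furnishes a ring section $g\colon k\to R$ of the map $R\to k$. I would then check that $g$ restricts to the sought group section on units. Indeed, $g$ is a homomorphism of rings, hence multiplicative with $g(1)=1$, and for any $a\in k^{\ast}$ the image $g(a)$ has residue $a\neq 0$ (because $g$ sections $R\to k$), so $g(a)\notin M$ and therefore $g(a)\in R^{\ast}$. Thus $g|_{k^{\ast}}\colon k^{\ast}\to R^{\ast}$ is a group homomorphism satisfying $\pi\circ g|_{k^{\ast}}=\mathrm{id}_{k^{\ast}}$, which is exactly a splitting of the sequence.

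In the remaining case $\Char(R)\neq\Char(k)$, the second assertion of Theorem \ref{Theorem A} directly provides a group-theoretic section of $R^{\ast}\to k^{\ast}$, so no further argument is needed. Combining the two cases, in either situation $\pi$ admits a section and the exact sequence splits, which is the claim.

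I do not anticipate a genuine obstacle here, since Theorem \ref{Theorem A} does all of the heavy lifting. The only points that require care are the elementary verification in the equal-characteristic case that a ring section automatically restricts to a group section on the unit groups (using that a ring map carries units to units and preserves $1$), together with the bookkeeping that the equal- and unequal-characteristic cases genuinely exhaust all complete local rings, so that one of the two parts of Theorem \ref{Theorem A} always applies.
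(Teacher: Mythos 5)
Your proof is correct and follows essentially the same route as the paper, which simply derives the corollary from the main theorem; you invoke Theorem \ref{Theorem A} (the paper cites its variant Theorem \ref{Theorem 1}, whose case split is by $\Char(k)$ rather than by whether $\Char(R)=\Char(k)$, but the content is the same). The one step you spell out that the paper leaves implicit --- that a ring section $k\to R$ restricts to a group section $k^{\ast}\to R^{\ast}$ because a ring map over a local ring sends nonzero residues to units --- is exactly the right gap to fill.
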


Next, we show that the above exact sequence does not split for most incomplete local rings (see Example \ref{Example 1}). 

We also show with an example that Theorem \ref{Theorem A} is in its full generality, and it cannot be generalized further (see Example \ref{Example excellent 3}). 

Splitting in a finite local ring is presented in Section 2. But splitting in a general complete local ring is presented in \S3.  

\section{Finite case-the initial motivation}

Throughout this article, for a commutative ring $R$ by $R^{\ast}$ we mean the group of units of $R$. If $f:A\rightarrow B$ is a morphism of rings, then the induced map $f^{\ast}:A^{\ast}\rightarrow B^{\ast}$ given by $a\mapsto f(a)$ is a morphism of groups. In this way, we obtain a covariant functor from the category of (commutative) rings to the category of (Abelian) groups. If $f$ is surjective, then $f^{\ast}$  is not necessarily surjective in general. In this regard, we have the following simple but important result (which generalizes \cite[Proposition 2.2]{Del Corso-Dvor}):

\begin{lemma}\label{Lemma 1} If $I$ is an ideal of a ring $R$, then the following statements are equivalent: \\
$\mathbf{(i)}$ $I$ is contained in the Jacobson radical of $R$. \\
$\mathbf{(ii)}$ $1+I$ is a subgroup of $R^{\ast}$. \\
$\mathbf{(iii)}$ We have the following short exact sequence of Abelian groups: $$\xymatrix{1\ar[r]&1+I\ar[r]& R^{\ast}\ar[r]&(R/I)^{\ast}
\ar[r]&1.}$$
\end{lemma}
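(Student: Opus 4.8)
The plan is to establish the cycle of implications $\mathbf{(i)}\Rightarrow\mathbf{(ii)}\Rightarrow\mathbf{(i)}$ together with $\mathbf{(i)}\Rightarrow\mathbf{(iii)}\Rightarrow\mathbf{(ii)}$, so that the three conditions become mutually equivalent. The only external tool I would rely on is the standard characterization of the Jacobson radical $J(R)$: an element $a\in R$ lies in $J(R)$ if and only if $1+ra$ is a unit of $R$ for every $r\in R$.

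For $\mathbf{(i)}\Rightarrow\mathbf{(ii)}$, assume $I\subseteq J(R)$. Taking $r=1$ in the characterization shows that $1+a\in R^{\ast}$ for every $a\in I$, whence $1+I\subseteq R^{\ast}$. Closure under multiplication follows from the identity $(1+a)(1+b)=1+(a+b+ab)$ together with $a+b+ab\in I$, and $1=1+0$ supplies the identity element. For inverses, if $1+a\in 1+I$ and we write $(1+a)^{-1}=1+c$, then expanding $(1+a)(1+c)=1$ gives $c(1+a)=-a$, so $c=-a(1+a)^{-1}\in I$ since $I$ is an ideal; hence $1+I$ is a subgroup of $R^{\ast}$. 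Conversely, for $\mathbf{(ii)}\Rightarrow\mathbf{(i)}$, a subgroup of $R^{\ast}$ is in particular contained in $R^{\ast}$, so for any $a\in I$ and any $r\in R$ we have $ra\in I$ and therefore $1+ra\in 1+I\subseteq R^{\ast}$ is a unit; the characterization then yields $a\in J(R)$, i.e. $I\subseteq J(R)$.

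For $\mathbf{(i)}\Rightarrow\mathbf{(iii)}$, I would work with the group homomorphism $R^{\ast}\rightarrow (R/I)^{\ast}$ induced by the quotient map $R\rightarrow R/I$, as described in the functorial discussion preceding the lemma. Its kernel is $\{u\in R^{\ast}: u-1\in I\}=R^{\ast}\cap(1+I)$, which coincides with $1+I$ once we invoke $1+I\subseteq R^{\ast}$ from $\mathbf{(i)}$; this gives exactness at $1+I$ and at $R^{\ast}$. The substantive point is surjectivity: given $x+I\in (R/I)^{\ast}$, choose $y\in R$ with $xy-1\in I\subseteq J(R)$, so that $xy=1+i$ with $i\in J(R)$ is a unit, and in the commutative ring $R$ this forces $x$ itself to be a unit mapping onto $x+I$. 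Finally $\mathbf{(iii)}\Rightarrow\mathbf{(ii)}$ is immediate, since the displayed exact sequence already exhibits $1+I$ as a subgroup of $R^{\ast}$.

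The main, and indeed the only non-formal, obstacle is the surjectivity of $R^{\ast}\rightarrow (R/I)^{\ast}$ in the step $\mathbf{(i)}\Rightarrow\mathbf{(iii)}$: the remaining verifications are routine bookkeeping with the ideal property of $I$, whereas lifting a unit modulo $I$ back to an actual unit of $R$ is precisely where the hypothesis $I\subseteq J(R)$ is used in an essential way.
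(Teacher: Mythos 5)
Your proposal is correct and follows essentially the same route as the paper: the same computation showing $1+I$ is closed under inverses, and the same unit-lifting argument (writing $xy=1+i$ with $i\in I$ and concluding $x\in R^{\ast}$) for surjectivity of $R^{\ast}\rightarrow(R/I)^{\ast}$. The only difference is cosmetic — you arrange the implications as $(i)\Leftrightarrow(ii)$ plus $(i)\Rightarrow(iii)\Rightarrow(ii)$ and invoke the standard characterization of the Jacobson radical explicitly, whereas the paper runs the single cycle $(i)\Rightarrow(ii)\Rightarrow(iii)\Rightarrow(i)$.
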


\begin{proof} (i)$\Rightarrow$(ii): It is clear that $1+I$ is a nonempty subset of $R^{\ast}$ and is also stable under multiplication. All that remains is to show that if $x\in I$ then $(1+x)^{-1}\in 1+I$. There exists some $y\in R$ such that $(1+x)y=1$. It follows that $(1+x)^{-1}=y=1-xy\in 1+I$. \\
(ii)$\Rightarrow$(iii): It can be easily seen that $1+I$ is the kernel of the natural map $R^{\ast}\rightarrow (R/I)^{\ast}$ which is given by $r\mapsto r+I$. This map is also surjective. Indeed, if $r+I\in(R/I)^{\ast}$ then $1-rs\in I$ for some $s\in R$. It follows that $rs\in 1+I\subseteq R^{\ast}$. Thus $rst=1$ for some $t\in R$. This shows that $r\in R^{\ast}$. \\
(iii)$\Rightarrow$(i): If $1+I\subseteq R^{\ast}$, then $I$ is contained in the Jacobson radical of $R$. 
\end{proof}

It can be seen that the characteristic of any ring $R$ with only trivial idempotents is zero or a power of a prime number (local rings as well as rings with a unique minimal prime ideal are typical examples of such rings).
Indeed, if $n=\Char(R)$ is not in the above form, then the prime subring of $R$ which is isomorphic to $\mathbb{Z}/n$ will have non-trivial idempotents (in fact, using Chinese remainder theorem, this ring has precisely $2^{d}$ idempotents where $d$ is the number of distinct primes that appear in the prime factorization of $n$). Then $R$  will also have non-trivial idempotents, which is a contradiction.

Also recall that if $f:A\rightarrow B$ is a morphism of rings, then it can be seen that $\Char(B)$ divides $\Char(A)$, i.e., $\Char(B)|\Char(A)$. In particular, if we have morphisms of rings $A\rightarrow B$ and $B\rightarrow A$, then $\Char(A)=\Char(B)$.
 
If we have a ring map $A\rightarrow B$ with $\Char(B)=0$, then $\Char(A)=0$. However, if $\Char(A)=p^{n}$ for some prime number $p$, then $\Char(B)=p^{d}$ with $d\leqslant n$. 

Keeping these in mind, we come to the following conclusion which is a well known result, its only innovation is probably our new proof: 

\begin{lemma}\label{Lemma 6} The order of any finite local ring is a positive power of a prime number that is the characteristic of its residue field. 
\end{lemma}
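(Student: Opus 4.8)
The plan is to reduce the statement to a fact about finite abelian groups by passing to the additive structure of $R$. First I would note that a finite local ring has only trivial idempotents, so by the observation recorded above its characteristic is either $0$ or a power of a prime; since $R$ is finite the unit $1$ has finite additive order, and thus $\Char(R)=p^{s}$ for some prime number $p$ and some integer $s\geqslant 1$.

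The central step is to compute the exponent of the additive group $(R,+)$. For every $r\in R$ and every integer $n$ one has $n\cdot r=(n\cdot 1)r$, so each element of $R$ is annihilated by $\Char(R)=p^{s}$, while $1$ has additive order exactly $p^{s}$. Hence $(R,+)$ is a finite abelian group of exponent $p^{s}$; every element therefore has order a power of $p$, so $(R,+)$ is a $p$-group and $|R|=p^{n}$ for some integer $n\geqslant 1$.

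It remains to identify this prime with the characteristic of the residue field. Applying the natural surjection $R\rightarrow k$ together with the fact that the characteristic of the target always divides that of the source, $\Char(k)$ divides $p^{s}$; since $k=R/M$ is a (finite) field its characteristic is prime, and the only prime dividing $p^{s}$ is $p$. Thus $\Char(k)=p$, so $|R|$ is a positive power of $p=\Char(k)$, as claimed.

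I do not expect a genuine obstacle here, since the argument rests only on the elementary fact that a finite abelian group of prime-power exponent is a $p$-group. The one point deserving care is the justification that $\Char(R)$ is a prime power and not merely a positive integer; this is precisely where the locality of $R$ (equivalently, the absence of nontrivial idempotents) is essential. An alternative route avoiding this step would be to filter $R$ by the powers of the nilpotent maximal ideal $M$ and observe that each successive quotient $M^{i}/M^{i+1}$ is a finite $k$-vector space, whose order is a power of $|k|=p^{m}$, so that $|R|$ is again a power of $p$.
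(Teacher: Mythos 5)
Your proof is correct and follows essentially the same route as the paper: both arguments first use the absence of nontrivial idempotents to see that $\Char(R)=p^{s}$, then conclude that the additive group $(R,+)$ is a $p$-group (your appeal to ``a finite abelian group of prime-power exponent is a $p$-group'' is exactly the Cauchy-theorem step the paper makes explicit), and finally identify $p$ with $\Char(k)$ via the surjection $R\rightarrow k$. No gaps.
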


\begin{proof} Let $(R,M,k)$ be a finite local ring of order $n$. We know that $R$ is of characteristic $p^{d}$ where $p$ is a prime number and $d\geqslant1$. If $q$ is a prime factor of $n$ then by Cauchy theorem for finite groups (see e.g. \cite[Corollary 5.4.]{Issacs}), the additive group of $R$ has an element $x\in R$ of order $q$. This means that $q\cdot x=0$. We also have $p^{d}\cdot x=0$. Then $q|p^{d}$ and so $q=p$. This shows that $n=p^k$ where $k\geqslant d$. Finally, using the natural ring map $R\rightarrow k$ we conclude that $p=\Char(k)$. 
\end{proof}

In particular, the order of a finite field is a positive power of a prime number. It is also clear that the converse of the above lemma is not true. For example, consider the non-local ring $\mathbb{Z}/p\times\mathbb{Z}/p$ which is of order $p^{2}$.   

\begin{lemma}\label{Lemma 5} Let $G$ be a finite Abelian group of order $|G|=mn$ with $\gcd(m,n)=1$. If $H$ is a subgroup of $G$ of order $m$, then  there exists a subgroup $K$ of $G$ of order $n$ such that $G=HK\simeq H \times K$ and the following short exact sequence: $$\xymatrix{1\ar[r]&H\ar[r]& G\ar[r]&G/H
\ar[r]&1}$$ is split.  
\end{lemma}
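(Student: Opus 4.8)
The plan is to exhibit the complement $K$ explicitly as the $n$-torsion subgroup of $G$. Writing $G$ multiplicatively with identity $e$, I would set $K = \{g \in G : g^n = e\}$; since $G$ is abelian this is a subgroup, being the kernel of the endomorphism $g \mapsto g^n$. The whole argument then rests on the coprimality $\gcd(m,n)=1$: I would fix, by B\'ezout's identity, integers $a,b$ with $am+bn=1$, and keep in mind the two elementary facts that every $h \in H$ satisfies $h^m = e$ (Lagrange, $|H|=m$) and every $g \in G$ satisfies $g^{mn} = g^{|G|} = e$.

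First I would establish the two structural facts. If $x \in H \cap K$ then $x^m = x^n = e$, so the order of $x$ divides $\gcd(m,n)=1$ and $x=e$; thus $H \cap K = \{e\}$. For the product being all of $G$, it is cleaner to introduce the full $m$-torsion subgroup $H_m = \{g \in G : g^m = e\}$ and to observe that every $g$ factors as $g = g^{am+bn} = g^{bn}\,g^{am}$, where $(g^{bn})^m = (g^{mn})^b = e$ puts $g^{bn} \in H_m$ and $(g^{am})^n = (g^{mn})^a = e$ puts $g^{am} \in K$. Hence $G = H_m K$, and combined with $H_m \cap K = \{e\}$ (the same order argument) this realizes $G$ as the internal direct product $H_m \times K$, so that $|H_m|\,|K| = mn$.

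The decisive step is to compute these two orders. Since every element of $H_m$ has order dividing $m$, Cauchy's theorem forces every prime divisor of $|H_m|$ to divide $m$; as $\gcd(m,n)=1$ this yields $\gcd(|H_m|,n)=1$, and from $|H_m| \mid mn$ I would conclude $|H_m| \mid m$. Symmetrically $|K| \mid n$, so the identity $|H_m|\,|K| = mn$ can hold only if $|H_m| = m$ and $|K| = n$. Now $H \subseteq H_m$ by Lagrange while $|H| = m = |H_m|$, so in fact $H = H_m$ and therefore $G = H \times K$ with $|K| = n$, as required. The splitting of the sequence is then immediate: the composite $K \hookrightarrow G \to G/H$ has kernel $H \cap K = \{e\}$ and is onto because $G = HK$, hence is an isomorphism, and its inverse followed by the inclusion $K \hookrightarrow G$ is the desired section of $G \to G/H$.

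I expect the only genuine obstacle to be the order count in the third paragraph, specifically the point that an \emph{arbitrary} subgroup $H$ of order $m$ is forced to coincide with the canonical $m$-torsion subgroup $H_m$. Everything else is a routine consequence of B\'ezout's identity and Lagrange's theorem; it is precisely the identification $|H_m| = m$ (via Cauchy's theorem, or equivalently via the primary decomposition of a finite abelian group) that upgrades the a priori inequality $|H_m| \geq m$ to an equality and thereby pins down $|K| = n$.
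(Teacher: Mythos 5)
Your proof is correct and follows essentially the same route as the paper: both exhibit the complement as the $n$-torsion subgroup $K=\{x\in G: x^{n}=1\}$ and identify $H$ with the $m$-torsion subgroup. The only differences are in bookkeeping — you establish $G=H_mK$ via the B\'ezout factorization $g=g^{bn}g^{am}$ and pin down $|H_m|=m$, $|K|=n$ with Cauchy's theorem, whereas the paper argues $H=\{x:x^{m}=1\}$ by an order computation in $G/H$ and then invokes the product formula $|HK|=|H||K|/|H\cap K|$; your version is, if anything, slightly more complete, since it explicitly justifies $|K|=n$ before that count is used.
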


\begin{proof} This is an easy exercise in basic group theory. In fact, it can be seen that $H=\{x\in G: x^{m}=1\}$. Indeed, take $x\in G$ with $x^{m}=1$. If $x\notin H$ then $xH$ is a nontrivial element of the group $G/H$ and hence its order $d\geqslant2$. But $|G/H|=n$ and so $d|n$ (because according to Lagrange's theorem, the order of each member divides the order of the group). We also have $(xH)^{m}=H$ and so $d|m$. But this is in contradiction with $\gcd(m,n)=1$. \\ 
Since $G$ is Abelian, $K:=\{x\in G: x^{n}=1\}$ is a subgroup of $G$. We also have $H\cap K=1$, because $\gcd(m,n)=1$. Thus $|HK|=(|H|\cdot|K|)/|H\cap K|=mn$. This shows that $G=HK$. Then the map $G=HK\rightarrow H \times K$ given by $g=hk\mapsto (h,k)$ is an isomorphism of groups, and its composition with the projective map $H\times K\rightarrow H$ is an splitting of the inclusion map $H\rightarrow G$.   
\end{proof}

\begin{remark} We should add that a variant of the above lemma is also true more generally for finite non-Abelian groups, which is known as the Schur-Zassenhaus theorem. But the proof of the general case is technical. However, we only need the Abelian case of this theorem here.
\end{remark}

Recall that if $I$ is an ideal of a ring $R$, then $R$ is complete with respect to the $I$-adic toplogy (i.e., $R$ is Hausdorff and every Cauchy sequence is convergent in $R$) if and only if the natural ring map $R\rightarrow
\lim\limits_{\overleftarrow{n\geqslant1}}R/I^{n}$ given by $r\mapsto(r+I^{n})_{n\geqslant1}$ is an isomorphism.
In the same vein, by complete local ring we mean a local ring $(R,M)$ such that it is complete with respect to the $M$-adic topology. Note that in this definition $R$ is not assumed to be Noetherian (contrary to the classical literature, see e.g. \cite{Cohen}, where complete local rings were also assumed to be Noetherian).

\begin{remark}\label{Remark 5 five} It can be easily seen that every Artinian local ring or more generally every local ring  whose maximal ideal is nilpotent is a complete local ring. In particular, every finite local ring is complete. As another example, if $M$ is a maximal ideal of a ring $R$, then the maximal ideal $M/M^{d}$ of $R/M^{d}$ is nilpotent and hence it is a complete local ring for all $d\geqslant1$.  
\end{remark}

The finite case of one of the main results of this article reads as follows (this finite case was indeed the initial motivation for all subsequent explorations in this article): 

\begin{theorem}\label{Prop 1} Let $(R,M,k)$ be a finite local ring. Then the sequence: $$\xymatrix{1\ar[r]&1+M\ar[r]& R^{\ast}\ar[r]&k^{\ast}
\ar[r]&1}$$ is split-exact. 
In particular, we have an isomorphism of groups $R^{\ast}\simeq (1+M) \times k^{\ast}$.
\end{theorem}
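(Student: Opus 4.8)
The plan is to reduce the entire statement to the abelian Schur--Zassenhaus result recorded in Lemma \ref{Lemma 5}, so that the only real work is verifying its hypotheses. First I would establish exactness of the sequence. Since $R$ is local, its maximal ideal $M$ coincides with the Jacobson radical of $R$; applying Lemma \ref{Lemma 1} with $I=M$ therefore yields the short exact sequence of abelian groups $1\rightarrow 1+M\rightarrow R^{\ast}\rightarrow k^{\ast}\rightarrow 1$, and in particular $R^{\ast}/(1+M)\simeq k^{\ast}$.

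Next I would compute the two relevant orders and check that they are coprime, which is the condition needed to invoke Lemma \ref{Lemma 5}. Because $R$ is a finite commutative ring, $R^{\ast}$ is a finite abelian group. By Lemma \ref{Lemma 6} we have $|R|=p^{d}$, where $p=\Char(k)$. Since $R$ is local, $R^{\ast}=R\setminus M$, and $1+M$ is a coset of the additive subgroup $M$, so $|1+M|=|M|$ is a power of $p$; write $m:=|1+M|$. On the other hand $k$ is a finite field of characteristic $p$, so $|k|=p^{f}$ and $n:=|k^{\ast}|=p^{f}-1$ is prime to $p$. Hence $\gcd(m,n)=1$.

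Finally I would apply Lemma \ref{Lemma 5} with $G=R^{\ast}$, $H=1+M$ of order $m$, and $G/H\simeq k^{\ast}$ of order $n$: as $G$ is a finite abelian group with $\gcd(|H|,|G/H|)=1$, the lemma produces a complementary subgroup and splits the sequence, giving the isomorphism $R^{\ast}\simeq (1+M)\times k^{\ast}$.

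There is no genuine obstacle here beyond assembling the previously proven lemmas; if any point deserves care, it is the coprimality of $|1+M|$ and $|k^{\ast}|$, which is precisely where the structural content of Lemma \ref{Lemma 6} — that a finite local ring has prime-power order with the prime equal to $\Char(k)$ — enters decisively. Everything else is bookkeeping on orders of finite abelian groups.
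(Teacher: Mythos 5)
Your proposal is correct and follows essentially the same route as the paper: exactness via Lemma \ref{Lemma 1}, the coprimality of $|1+M|=|M|$ (a power of $p=\Char(k)$ by Lemma \ref{Lemma 6} and Lagrange) and $|k^{\ast}|=p^{f}-1$, and then the abelian Schur--Zassenhaus statement of Lemma \ref{Lemma 5}. No gaps; nothing further to add.
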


\begin{proof} By Lemma \ref{Lemma 1}, the above sequence is exact. To prove splitting, it can be seen that the order of the (multiplicative) group $1+M$ is coprime to the order of the quotient group $R^{\ast}/(1+M)\simeq k^{\ast}$. Indeed, using Lemma \ref{Lemma 6} and Lagrange theorem (note that $M$ is an additive subgroup of $R$), we have $|1+M|=|M|$ is a power of the prime $p=\Char(k)$, but $|k^{\ast}|=p^{d}-1$ for some $d\geqslant1$. Then apply Lemma \ref{Lemma 5}.
\end{proof}

We also learned that the above finite case has also been investigated in the proof of \cite[Theorem 3.1]{Del Corso-Dvor}. 

\section{The general case}

This section is the technical heart and culmination of this article.  To prove the main results, we need the following lemmas:

\begin{lemma}\label{Lemma 2} Let $I$ be an ideal of a ring $R$ which is contained in the Jacobson radical of $R$. Then for each $n\geqslant1$, the multiplicative group $G=(1+I^{n})/(1+I^{n+1})$ is naturally isomorphic to the additive group $I^{n}/I^{n+1}$, and we have the following short exact sequence of Abelian groups: $$\xymatrix{1\ar[r]&G\ar[r]& (R/I^{n+1})^{\ast}\ar[r]&(R/I^{n})^{\ast}
\ar[r]&1.}$$ In particular, the Abelian group $G$ is vanished by $\Char(R/I)$.  
\end{lemma}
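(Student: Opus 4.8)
The plan is to construct the isomorphism $G\simeq I^n/I^{n+1}$ directly, then derive the short exact sequence by applying Lemma~\ref{Lemma 1} to the quotient ring $R/I^{n+1}$, and finally read off the annihilation statement from the $R/I$-module structure on $I^n/I^{n+1}$. As a preliminary I would record that $I^{n+1}\subseteq I^n\subseteq I\subseteq J(R)$, so by Lemma~\ref{Lemma 1} both $1+I^n$ and $1+I^{n+1}$ are subgroups of $R^{\ast}$; since $R$ is commutative these are Abelian, so $1+I^{n+1}$ is normal in $1+I^n$ and the quotient $G$ is a well-defined Abelian group.

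To obtain the isomorphism I would define $\psi\colon 1+I^n\to I^n/I^{n+1}$ by $1+x\mapsto x+I^{n+1}$ for $x\in I^n$. The decisive computation is that for $x,y\in I^n$ one has $(1+x)(1+y)=1+(x+y+xy)$ with $xy\in I^{2n}\subseteq I^{n+1}$ (here the assumption $n\geqslant1$ is precisely what gives $2n\geqslant n+1$), so $\psi$ sends products to sums and is a homomorphism. It is plainly surjective, and $1+x\in\Ker\psi$ exactly when $x\in I^{n+1}$, i.e. $\Ker\psi=1+I^{n+1}$; the first isomorphism theorem then yields the natural isomorphism $G=(1+I^n)/(1+I^{n+1})\simeq I^n/I^{n+1}$.

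For the short exact sequence I would apply Lemma~\ref{Lemma 1} to the ring $\overline{R}=R/I^{n+1}$ and its ideal $J=I^n/I^{n+1}$. Because $I^{2n}\subseteq I^{n+1}$ we have $J^2=0$, so $J$ is nilpotent and hence contained in the Jacobson radical of $\overline{R}$; Lemma~\ref{Lemma 1} therefore supplies the exact sequence $$\xymatrix{1\ar[r]&1+J\ar[r]&\overline{R}^{\ast}\ar[r]&(\overline{R}/J)^{\ast}\ar[r]&1.}$$ It then remains to identify $1+J\simeq G$ (via the natural surjection $1+I^n\to\overline{R}^{\ast}$, $1+x\mapsto(1+x)+I^{n+1}$, whose kernel is $1+I^{n+1}$) and $\overline{R}/J\simeq R/I^n$ (third isomorphism theorem), which convert the displayed sequence into the asserted one. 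Finally, for the annihilation claim I would note that $I\cdot(I^n/I^{n+1})=I^{n+1}/I^{n+1}=0$, so $I^n/I^{n+1}$ is naturally an $R/I$-module; writing $c=\Char(R/I)$ we get $c\cdot m=(c\cdot 1_{R/I})\,m=0$ for every $m\in I^n/I^{n+1}$, and transporting along $G\simeq I^n/I^{n+1}$ shows that $G$ is annihilated by $c$.

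I expect no genuine obstacle: granted Lemma~\ref{Lemma 1}, the whole argument is elementary. The single point demanding care---and the hinge of the proof---is the repeated appeal to $I^{2n}\subseteq I^{n+1}$ for $n\geqslant1$, which is needed both to make $\psi$ multiplicative and to see that $J$ is square-zero, so that Lemma~\ref{Lemma 1} applies to $\overline{R}$.
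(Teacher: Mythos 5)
Your proposal is correct and follows essentially the same route as the paper's proof: the same homomorphism $1+x\mapsto x+I^{n+1}$ yielding $G\simeq I^{n}/I^{n+1}$, the same reduction of the exact sequence to Lemma \ref{Lemma 1} applied to $R/I^{n+1}$ with the square-zero ideal $I^{n}/I^{n+1}$, and the same $R/I$-module argument for the annihilation by $\Char(R/I)$. Your explicit remark that $J$ lies in the Jacobson radical because it is nilpotent is a small point the paper leaves implicit, but otherwise the arguments coincide.
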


\begin{proof} We know that $1+I^{n}$ is a subgroup of $R^{\ast}$ (see Lemma \ref{Lemma 1}). The map $1+I^{n}\rightarrow I^{n}/I^{n+1}$
given by $1+x\mapsto x+I^{n+1}$ is a morphism of groups, because if $x,y\in I^{n}$ then $xy\in I^{2n}\subseteq I^{n+1}$ and so $x+y+xy+I^{n+1}=x+y+I^{n+1}$. This map is clearly surjective and its kernel is equal to $1+I^{n+1}$. Hence, the above map induces an isomorphism of Abelian groups $G\simeq I^{n}/I^{n+1}$. \\
To see the exactness of the above sequence, it can be seen that the map $1+I^{n}\rightarrow 1+(I^{n}/I^{n+1})\subseteq (R/I^{n+1})^{\ast}$  given by $1+x\mapsto(1+x)+I^{n+1}$ is a surjective morphism of groups whose kernel is equal to $1+I^{n+1}$. Thus this map induces an isomorphism of Abelian groups $G\simeq1+(I^{n}/I^{n+1})$.
Then apply Lemma \ref{Lemma 1}. \\
Using the above isomorphism of Abelian groups $\xymatrix{G\ar[r]^{\simeq\:\:\:\:\:\:\:\:\:\:\:}
&I^{n}/I^{n+1},}$ we can make $G$ into an $R/I$-module (and then this map becomes an isomorphism of $R/I$-modules). Now the statement about the vanishing is clear, because if $M$ is a module over a ring $R$ then $M$ is vanished by $\Char(R)$.   
\end{proof}

Recall that if $R$ is a reduced ring of characteristic $p$ with $p$ a prime number, then the group $R^{\ast}$ has trivial $p$-torsion. Indeed, if $a^{p}=1$ for some $a\in R^{\ast}$, then using the Frobenius endomorphism we have $(a-1)^{p}=a^{p}-1=0$. This shows that $a-1$ is nilpotent, and so $a=1$. In particular, if $k$ is a field of positive characteristic $p$, then $k^{\ast}$ has trivial $p$-torsion.

\begin{remark}\label{Remark 1} Let $\mathscr{C}$ be a category which admits inverse limits. Then for any object $A$ and for any inverse system of objects $(B_{i})_{i\in I}$ in this category, it can be easily seen that the $\Hom_{\mathscr{C}}(A,B_{i})$ is an inverse system of sets over the same index set $I$ and
the natural map:  $$\Hom_{\mathscr{C}}(A,\lim\limits_{\overleftarrow{i\in I}}B_{i})\rightarrow\lim\limits_{\overleftarrow{i\in I}}
\Hom_{\mathscr{C}}(A,B_{i})$$ given by $f\mapsto(p_{i}f)_{i\in I}$
is an isomorphism (bijection) of sets where $p_{k}:\lim\limits_{\overleftarrow{i\in I}}B_{i}\rightarrow B_{k}$ is the projection morphism. 
\end{remark}

Recall that a morphism $f:A\rightarrow B$ in a category has a section if there is a morphism $g:B\rightarrow A$ in that category
such that $fg:B\rightarrow B$ is the identity morphism.

If in a local ring $(R,M,k)$ the natural ring map $R\rightarrow k$ admits a splitting (i.e., has a section) then $\Char(R)=\Char(k)$. We will see that the converse is also true for complete local rings. 

\begin{remark}\label{Remark 3} It can be seen that a ring map $f: R\rightarrow S$ is formally étale (in the sense of \cite[Tag 00UQ]{Johan}) if and only if the infinitesimal lifting property of formally étale maps holds for any nilpotent ideal. Indeed, if $f$ is formally étale, then it suffices to show that any commutative diagram of rings of the following form:  $$\xymatrix{
R\ar[r]^{f}\ar[d]^{}
&S\ar[d]^{} \\A\ar[r]^{}&
A/I}$$
can be fulfilled, i.e., one can find a unique ring map $S\rightarrow A$ that makes the new diagram commute, where $I$ is a nilpotent ideal of $A$ and $A\rightarrow A/I$ is the natural ring map. To find such a ring map, we first extend from $A/I$ to $A/I^{2}$, then to $A/I^{4}$ and so on. More precisely, the following diagram is commutative: $$\xymatrix{
R\ar[r]^{f}\ar[d]^{}
&S\ar[d]^{} \\A/I^{2}\ar[r]^{}&
A/I}$$ where $A/I^{2}\rightarrow A/I$ is the natural ring map. Since the ideal $I/I^{2}\subseteq A/I^{2}$ is of square-zero, there exists a unique ring map $S\rightarrow A/I^{2}$ making the new diagram commute. Then apply the same argument for the following commutative diagram: $$\xymatrix{
R\ar[r]^{f}\ar[d]^{}
&S\ar[d]^{} \\A/I^{4}\ar[r]^{}&
A/I^{2}.}$$
In this way, the desired ring map $S\rightarrow A$ is eventually obtained, because $I^{n}=0$ for some $n\geqslant1$. The same observation is true for formally smooth ring maps. More precisely, a ring map $f: R\rightarrow S$ is formally smooth (in the sense of \cite[Tag 00TI]{Johan}) if and only if the infinitesimal lifting property of formally smooth maps holds for any nilpotent ideal. 
\end{remark}

Formally étaleness is preserved by direct limits as follows:

\begin{lemma}\label{Lemma 9 nine} Let $f: R\rightarrow S$ be a morphism of rings. If $S$ is the direct limit of a  direct system $(S_{i})$ of formally étale $R$-algebras $f_{i}: R\rightarrow S_{i}$ such that the $f_{i}$ are compatible with transition maps and $f$ factors through the $f_{i}$, then $f$ is formally étale. 
\end{lemma}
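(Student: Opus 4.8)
The plan is to verify the defining infinitesimal lifting property of formally étale maps for $f$ directly, exploiting crucially \emph{both} the existence and the uniqueness of lifts along each $f_{i}$. It suffices to treat a square-zero ideal, so I fix a ring $A$, an ideal $I\subseteq A$ with $I^{2}=0$, and a commutative diagram of rings
$$\xymatrix{
R\ar[r]^{f}\ar[d]
&S\ar[d]^{g} \\ A\ar[r]&
A/I,}$$
where the left vertical arrow is the structure map $R\rightarrow A$. I must produce a \emph{unique} ring map $h:S\rightarrow A$ with $hf$ equal to $R\rightarrow A$ and with $h$ lifting $g$. For each index $i$ write $u_{i}:S_{i}\rightarrow S$ for the canonical map into the direct limit, so that $f=u_{i}f_{i}$. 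Then $gu_{i}:S_{i}\rightarrow A/I$ together with $f_{i}$ constitutes a square-zero lifting problem for the formally étale map $f_{i}$, so there is a unique $R$-algebra map $h_{i}:S_{i}\rightarrow A$ lifting $gu_{i}$.

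The heart of the argument is to check that the family $(h_{i})$ is compatible with the transition maps $\varphi:S_{i}\rightarrow S_{j}$, that is $h_{j}\varphi=h_{i}$. Indeed $h_{j}\varphi$ is a ring map $S_{i}\rightarrow A$; it is an $R$-algebra map since $h_{j}\varphi f_{i}=h_{j}f_{j}$ is the map $R\rightarrow A$ (using that the $f_{i}$ are compatible with the transition maps), and it lifts $gu_{i}$ since $u_{j}\varphi=u_{i}$. By the \emph{uniqueness} clause in the formal étaleness of $f_{i}$ it must coincide with $h_{i}$. This compatibility is precisely where the uniqueness half of the definition (not merely the existence of lifts) is indispensable, and it is the step I expect to be the main obstacle: without it the $h_{i}$ would not be guaranteed to glue.

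Granting compatibility, the universal property of the direct limit $S=\varinjlim S_{i}$ furnishes a unique ring map $h:S\rightarrow A$ with $hu_{i}=h_{i}$ for every $i$, and $h$ is automatically an $R$-algebra map. That $h$ lifts $g$ and that $hf$ is the prescribed map $R\rightarrow A$ can each be verified after composition with the $u_{i}$, where they hold by construction; since these composites determine any map out of the direct limit, the identities pass to $h$ itself. Uniqueness of $h$ follows the same pattern: if $h'$ is another lift, then each $h'u_{i}$ is an $R$-algebra lift of $gu_{i}$, hence equals $h_{i}$ by uniqueness at level $i$, so $h'u_{i}=hu_{i}$ for all $i$ and therefore $h'=h$.

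Equivalently, and more conceptually, formal étaleness of $f_{i}$ says exactly that the restriction map $\Hom_{R}(S_{i},A)\rightarrow\Hom_{R}(S_{i},A/I)$ is a bijection for every square-zero extension $A\rightarrow A/I$. Passing to the inverse limit over $i$ and using the identification $\Hom_{R}(\varinjlim S_{i},-)\simeq\varprojlim\Hom_{R}(S_{i},-)$ (the dual of Remark \ref{Remark 1}, i.e. the universal property of the direct limit) turns this into the bijectivity of $\Hom_{R}(S,A)\rightarrow\Hom_{R}(S,A/I)$, since an inverse limit of bijections is a bijection; and this bijectivity is exactly the formal étaleness of $f$.
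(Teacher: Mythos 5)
Your proof is correct and follows essentially the same route as the paper's: solve the square-zero lifting problem on each $S_i$, use the uniqueness clause of formal étaleness to see that the lifts $h_i$ are compatible with the transition maps, and glue via the universal property of the direct limit, with uniqueness of the glued lift again coming from uniqueness at each level. You in fact spell out the compatibility step that the paper merely asserts is ``easily seen,'' and your closing reformulation via $\Hom_{R}(\varinjlim S_{i},-)\simeq\varprojlim\Hom_{R}(S_{i},-)$ and inverse limits of bijections is just a clean repackaging of the same argument.
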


\begin{proof} Consider the following commutative diagram of rings:  $$\xymatrix{
R\ar[r]^{f}\ar[d]^{h}
&S\ar[d]^{h'} \\A\ar[r]^{\pi}&
A/I}$$ where $I$ is an ideal of $A$ of square-zero and $\pi$ is the natural ring map. Then for each $i$, the following diagram is commutative:  $$\xymatrix{
R\ar[r]^{f_{i}}\ar[d]^{h}
&S_{i}\ar[d]^{h'\alpha_{i}} \\A\ar[r]^{\pi}&
A/I}$$ where $\alpha_{i}:S_{i}\rightarrow S$ is the natural ring map, and so there exists a unique ring map $g_{i}:S_{i}\rightarrow A$ making the new diagram commute. It can be easily seen that the maps $g_{i}$ are compatible with the transition maps of the above direct system. Then by the universal property of direct limits, there exists a unique ring map $g:S\rightarrow A$ such that $g_{i}=g\alpha_{i}$ for all $i$. \\
We show that $g$ is the unique ring map making the above first diagram commute. We may assume that $S$ is a nonzero ring (if $S$ is a zero ring, then $A$ is also a zero ring and the assertion is clear in this case), and hence the index set of the above direct system is nonempty, so we may choose some index $i$. We have $f=\alpha_{i}f_{i}$ and so $gf=g\alpha_{i}f_{i}=g_{i}f_{i}=h$. To see $\pi g=h'$, take $x\in S$, then there exists an index $i$ and some $s\in S_{i}$ such that $x=\alpha_{i}(s)$, then $(\pi g)(x)=\pi g\alpha_{i}(s)=\pi g_{i}(s)=h'\alpha_{i}(s)=h'(x)$. Hence, $g$ makes the above first diagram commute. To prove its uniqueness, suppose $g':S\rightarrow A$ is another ring map making the above first diagram commute. To see $g=g'$, it suffices to show that $g_{i}=g'\alpha_{i}$ for all $i$. We have $g'\alpha_{i}f_{i}=g'f=h$ and $\pi g'\alpha_{i}=h'\alpha_{i}$, and so by the uniqueness of $g_{i}$, we get that $g_{i}=g'\alpha_{i}$ for all $i$. This completes the proof.  
\end{proof}

Recall that by a separating transcendence basis for a given finitely generated field extension of fields $K/k$ we mean a (finite) transcendence basis $\{ x_{i} : i\in I\}\subseteq K$ for this extension such that the finite extension $K/k(x_{i} : i\in I)$ is a separable algebraic extension. 

Also recall that the notion of separable extension can be also defined for arbitrary extension of fields (see e.g. \cite[Tag 030O]{Johan}). For convenience, we mention this definition here: 

\begin{definition}\label{Def 1} An arbitrary extension of fields $K/k$ (not necessarily algebraic) is called separable if for any subextension $k\subseteq K'\subseteq K$ with $K'$ a finitely generated field extension of $k$, the extension $k\subseteq K'$ has a separating transcendence basis. 
\end{definition}

\begin{remark}\label{Remark 4} It can easily be seen that the above definition generalizes the classical notion of separable algebraic extension. Indeed, if an algebraic extension of fields $K/k$ is separable in the above sense, then for any subextension $k\subseteq K'\subseteq K$ with $K'$ a finitely generated field extension of $k$, the extension $k\subseteq K'$ has a separating transcendence basis. Since $K/k$ is an algebraic extension, the extension $k\subseteq K'$ is also algebraic. Hence, the above separating transcendence basis is just the empty set, and so the extension $k\subseteq K'$  is a separable algebraic extension (in the usual sense). As $K'$ varies through the above subextensions, we conclude that the original extension $K/k$ is a separable algebraic extension (in the usual sense). Indeed, if $a\in K$ then by the above observation, $K'=k(a)$ is a separable algebraic extension over $k$. In particular, the element $a$ is separable over $k$.  
\end{remark}  

Note that Definition \ref{Def 1} can in turn be generalized in the following way (although we do not need it in this article): a morphism of rings $k\rightarrow A$ with $k$ a field is called a separable morphism  (in the sense of \cite[\S26]{Matsumura}) if for any field extension $k'/k$ the ring $k'\otimes_{k}A$ is reduced. For field extensions, these two definitions are essentially the same. More precisely, an extension of fields $K/k$ is separable (in the sense of Definition \ref{Def 1}) if and only if it is formally smooth (see \cite[Tags 031Y and 0320]{Johan}), or equivalently, for any field extension $k'/k$ the ring $k'\otimes_{k}K$ is reduced (see \cite[Theorem 26.9]{Matsumura}). In the algebraic extension case, we have the following result:

\begin{lemma}\label{Theorem 3} An algebraic extension of fields $K/k$ is separable if and only if it is formally étale. 
\end{lemma}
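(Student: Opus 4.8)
The plan is to prove the equivalence $K/k$ separable $\iff$ formally étale for an algebraic extension by treating the two directions separately, and in each direction reducing to the finitely generated (hence finite) case so that the classical theory of separable algebraic extensions becomes available. The guiding principle is that an algebraic extension is a direct limit of its finite subextensions, so Lemma \ref{Lemma 9 nine} lets us pass from finite pieces to the whole extension once we know formally étaleness for each finite piece.

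For the direction where $K/k$ is separable, I would first reduce to the case of a finite separable extension. If $K/k$ is separable algebraic, then by the primitive element theorem every finite subextension $k(a_1,\dots,a_n)/k$ is simple of the form $k(\theta)=k[x]/(m(x))$ where $m$ is a separable irreducible polynomial, i.e. $\gcd(m,m')=1$. For such a ring the infinitesimal lifting property is a direct computation: given the square-zero lifting problem, a lift is determined by choosing a root of $m$ in the target algebra reducing to the given one, and the condition $m'(\theta)$ being a unit (which follows from separability) guarantees by Hensel-type uniqueness that this lift exists and is unique. This exhibits each finite subextension $k\to k(\theta)$ as formally étale. Writing $K=\varinjlim_{\text{finite}} k(a_1,\dots,a_n)$ as a filtered direct limit of these formally étale $k$-algebras, with the inclusions as transition maps, Lemma \ref{Lemma 9 nine} immediately yields that $K/k$ is formally étale.

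For the converse, suppose $K/k$ is formally étale; I want to conclude $K/k$ is separable, and by Remark \ref{Remark 4} it suffices to show each element $a\in K$ is separable over $k$, equivalently that each finite subextension is formally étale and then invoke the finite-dimensional theory. The cleanest route is to argue contrapositively in characteristic $p$: if some $a\in K$ is inseparable, then its minimal polynomial $m(x)$ satisfies $m'=0$, so $m'(a)=0$ is not a unit in $k(a)$, and this degeneracy obstructs the unique-lifting property — concretely, the square-zero extension $k(a)[\varepsilon]/(\varepsilon^2)$ admits either no lift or a non-unique lift of the identity-reducing map, contradicting formal étaleness of the subextension $k\to k(a)$ (which would be forced by formal étaleness of $k\to K$ via base change / restriction along $k(a)\hookrightarrow K$). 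Assembling these finite obstructions shows $K/k$ must be separable.

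The main obstacle I anticipate is the converse direction, specifically making rigorous the claim that formal étaleness of $k\to K$ descends to formal étaleness of each finite subextension $k\to k(a)$, since formally étale maps are not obviously closed under passing to subalgebras. The honest fix is to work directly with the lifting criterion for $k(a)$ itself rather than trying to restrict the property from $K$: one tests formal étaleness of $K$ against the specific square-zero extension built from $k(a)[\varepsilon]/(\varepsilon^2)$ and tracks what the lift does to the image of $a$, extracting the separability of $a$ from the existence-and-uniqueness of that lift. Carrying this out carefully — ensuring the test diagram really sees the derivative $m'(a)$ and that uniqueness of the lift is equivalent to $m'(a)$ being a nonzerodivisor — is the technical crux, whereas the forward direction and the direct-limit bookkeeping are routine given Lemma \ref{Lemma 9 nine} and the primitive element theorem.
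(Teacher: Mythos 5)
Your forward direction is sound and is essentially the paper's argument: the paper likewise writes $K$ as the filtered union of its finite separable subextensions, quotes the known fact that a finite separable extension is formally étale, and invokes Lemma \ref{Lemma 9 nine}; your only variation is to prove the finite case by hand via the primitive element theorem and a Hensel-type lifting computation, which is fine.

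The converse, however, has a genuine gap that your proposed fix does not close. To test formal étaleness of $k\rightarrow K$ you must produce a commutative square whose lower-right corner \emph{receives a ring map from $K$}; but your test object is the square-zero extension $k(a)[\varepsilon]/(\varepsilon^{2})\rightarrow k(a)$, and there is no ring homomorphism $K\rightarrow k(a)$ unless $K=k(a)$ (any ring map out of the field $K$ is injective). So the diagram you want to ``test $K$ against'' does not exist, and the claim that formal étaleness of $k\rightarrow K$ forces formal étaleness of the subextension $k\rightarrow k(a)$ --- which you correctly identify as the crux --- is exactly what remains unproved. The workable substitute is to test against square-zero extensions of $K$ itself, e.g.\ $K\oplus V$ with $V$ a $K$-vector space: uniqueness of lifts of the identity $K\rightarrow K$ forces $\mathrm{Der}_{k}(K,V)=0$ for all $V$, hence $\Omega_{K/k}=0$; one then needs the true but non-formal fact that an algebraic extension with vanishing Kähler differentials is separable, which uses the structure theory of purely inseparable extensions. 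Note that ``tracking what the lift does to the image of $a$'' cannot work elementwise: for an inseparable element $a$ the class $da$ may vanish in $\Omega_{K/k}$ even though it is nonzero in $\Omega_{k(a)/k}$ (take $k=\mathbb{F}_{p}(t)$, $a=t^{1/p}$, $K=k(t^{1/p^{2}})$, where $da=d\bigl((t^{1/p^{2}})^{p}\bigr)=0$). The paper sidesteps all of this by citing \cite[Tag 031Y]{Johan} (formally étale implies separable in the sense of Definition \ref{Def 1}) together with Remark \ref{Remark 4}; if you want a self-contained converse, the $\Omega_{K/k}=0$ route is the one to carry out.
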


\begin{proof} First assume that $K/k$ is a separable algebraic extension of fields. Then we can write $K$ as the filtered union (direct limit) of all finite separable subextensions of $k$, i.e., $K=\bigcup_{\alpha}K_{\alpha}=
\lim\limits_{\overrightarrow{\alpha}}K_{\alpha}$ where each subfield $K_{\alpha}$ of $K$ is a finite separable extension of $k$ (note that the $K_{\alpha}$ are directed by inclusion inside $K$ and the transition maps are indeed the inclusion maps). It is well known that every finite separable extension of fields is formally étale (see \cite[Tags 00U3 and 00UR]{Johan} or \cite[Corollary 4.7.3]{Ford}). Thus for each $\alpha$, the extension $K_{\alpha}/k$ is formally étale. Then by Lemma \ref{Lemma 9 nine}, then original extension $K/k$ is formally étale as well. Conversely, if the extension $K/k$ is formally étale, then by \cite[Tag 031Y]{Johan}, this extension is separable in the above sense (see Definition \ref{Def 1}). Then by Remark \ref{Remark 4}, it is a separable algebraic extension.    
\end{proof}
 
\begin{remark} In relation with the above theorem, note that although any flat and separable morphism of commutative rings is formally étale \cite[Corollary 4.7.3]{Ford}, the notion of separable morphism of rings in the sense of \cite{Ford} only generalizes to ``finite'' separable extensions in fields (see \cite[Corollary 4.5.4(1)]{Ford}). 
\end{remark}

\begin{corollary}\label{Coro 3 three} If $k$ is a field of characteristic zero, then every algebraic extension of $k$ is formally étale. 
\end{corollary}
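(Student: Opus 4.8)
The plan is to deduce this corollary immediately from Lemma \ref{Theorem 3}, which already identifies formal étaleness with separability for algebraic extensions. Thus it suffices to show that when $\Char(k)=0$, every algebraic extension $K/k$ is separable, and then to invoke the forward direction of Lemma \ref{Theorem 3} to conclude that $K/k$ is formally étale.

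First I would reduce to the standard fact that in characteristic zero every algebraic extension is separable. The key observation is that if $a\in K$ is algebraic over $k$ with minimal polynomial $f\in k[x]$, then $f$ is irreducible and, since $\Char(k)=0$, its formal derivative $f'$ is a nonzero polynomial of degree $\deg(f)-1\geq 0$. Because $f$ is irreducible and $\deg(f')<\deg(f)$, the only common divisors of $f$ and $f'$ are units, so $\gcd(f,f')=1$; this forces $f$ to have no repeated roots in an algebraic closure, i.e. $a$ is separable over $k$. As $a\in K$ is arbitrary, $K/k$ is a separable algebraic extension in the classical sense, which by Remark \ref{Remark 4} agrees with separability in the sense of Definition \ref{Def 1}. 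Applying Lemma \ref{Theorem 3} then yields that $K/k$ is formally étale.

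The hard part here is essentially nonexistent, since the characteristic-zero separability of algebraic extensions is classical; the one point that genuinely requires the hypothesis is the nonvanishing $f'\neq 0$, which is exactly what can fail in positive characteristic and is what makes the assumption $\Char(k)=0$ indispensable. Everything else is a direct citation of Lemma \ref{Theorem 3} together with the dictionary of Remark \ref{Remark 4}.
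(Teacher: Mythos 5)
Your proposal is correct and follows exactly the paper's route: the paper's own proof is the one-line observation that in characteristic zero every algebraic extension is separable, followed by an application of Lemma \ref{Theorem 3}. You merely spell out the classical derivative argument for separability, which the paper takes as known.
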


\begin{proof} In the characteristic zero, every algebraic extension of fields is separable. Then apply Lemma \ref{Theorem 3}.
\end{proof}

To prove the first main theorem we also need to the following result: 

\begin{lemma}\label{Lemma 7} Let $k$ be a field of characteristic zero. If $f:A\rightarrow k$ is a surjection of $\mathbb{Q}$-algebras with nilpotent kernel, then there is a ring section $g:k\rightarrow A$.   
\end{lemma}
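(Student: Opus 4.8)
The plan is to split the desired section into a purely transcendental part, which we lift by hand, and an algebraic part, which we lift using formal étaleness. Write $I=\Ker(f)$; since $I$ is nilpotent it is contained in the Jacobson radical of $A$, so any element of $A$ whose image in $k$ is a unit is itself a unit in $A$. Choose a transcendence basis $\{x_{i}:i\in\Lambda\}$ of $k$ over $\mathbb{Q}$ and set $F=\mathbb{Q}(x_{i}:i\in\Lambda)$, the purely transcendental subextension; then $k/F$ is algebraic.

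First I would lift $F$ into $A$. For each $i$ pick $a_{i}\in A$ with $f(a_{i})=x_{i}$; this defines a $\mathbb{Q}$-algebra map $\varphi:\mathbb{Q}[x_{i}:i\in\Lambda]\rightarrow A$, $x_{i}\mapsto a_{i}$, with $f\circ\varphi$ equal to the inclusion $\mathbb{Q}[x_{i}:i\in\Lambda]\hookrightarrow k$. Because the $x_{i}$ are algebraically independent, every nonzero polynomial $p$ maps under $f\circ\varphi$ to a nonzero, hence invertible, element of the field $k$; by the remark on units above, $\varphi(p)$ is then a unit in $A$. Consequently $\varphi$ extends to a $\mathbb{Q}$-algebra map $\psi:F\rightarrow A$ satisfying $f\circ\psi=(F\hookrightarrow k)$.

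Next I would lift the algebraic extension $k/F$. Since $\Char(k)=0$, the algebraic extension $k/F$ is separable, hence formally étale by Corollary \ref{Coro 3 three}. Consider the commutative square whose top arrow is $\psi:F\rightarrow A$, left arrow the inclusion $F\hookrightarrow k$, right arrow $f:A\rightarrow k$, and bottom arrow $\mathrm{id}_{k}$ (it commutes because $f\circ\psi=(F\hookrightarrow k)$). As $I$ is nilpotent, Remark \ref{Remark 3} allows us to invoke the infinitesimal lifting property of the formally étale map $F\rightarrow k$ for this square: there is a (unique) ring map $g:k\rightarrow A$ with $g|_{F}=\psi$ and $f\circ g=\mathrm{id}_{k}$. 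This $g$ is the desired section.

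I expect the only real obstacle to be the transcendental step: one must verify that the naive lift of a transcendence basis extends to the fraction field $F$, which is precisely where the hypothesis that $\Ker(f)$ is nilpotent (forcing images of nonzero polynomials to be units) is used. The algebraic step is then essentially a direct application of Corollary \ref{Coro 3 three} together with Remark \ref{Remark 3}, and the $\mathbb{Q}$-algebra hypothesis is what guarantees that all the maps in sight can be taken to be $\mathbb{Q}$-algebra homomorphisms.
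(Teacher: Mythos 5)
Your proposal is correct and follows essentially the same route as the paper's own proof: lift a transcendence basis by hand, extend to the purely transcendental subfield $F=\mathbb{Q}(x_{i}:i\in\Lambda)$ using that nonzero polynomials land in units (the paper phrases this via $A$ being local with maximal ideal $\Ker(f)$, you via the Jacobson radical, which amounts to the same thing), and then lift the algebraic extension $k/F$ via Corollary \ref{Coro 3 three} and the infinitesimal lifting property of Remark \ref{Remark 3}. No gaps.
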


\begin{proof} Let $S=\{x_i: i \in I\}\subseteq k$ be a transcendence basis of the extension $k/\mathbb{Q}$. Since the set $S$ is algebraically independent over $\mathbb{Q}$, the natural morphism of $\mathbb{Q}$-algebras from the polynomial ring $\mathbb{Q}[X_{i}: i \in I]$ to $k$ given by $X_{i}\mapsto x_{i}$ is injective. 
Let $R=\mathbb{Q}[x_{i}: i \in I]$ be the image of this ring map. Since $f$ is surjective, for each $i$, there exists some $a_i \in A$ such that $f(a_{i})=x_{i}$. Then again by the universal property of polynomial rings, we have a (unique) morphism of $\mathbb{Q}$-algebras $R\rightarrow A$ which maps each $x_{i}$ into $a_{i}$. \\ 
To extend this map to $k_{0}=\mathbb{Q}(x_i : i \in I)$, the field of fractions of the integral domain $R$, we need to show that the image of every nonzero element $P(x_{i_{1}},\ldots, x_{i_{n}})$ of $R$ under this map is invertible in $A$. 
Note that $A$ is a local ring with the maximal ideal $M=\Ker(f)$, because $M$ is nilpotent. So it suffices to show that $P(a_{i_{1}},\ldots, a_{i_{n}})\notin M$. Since $f$ is a $\mathbb{Q}$-algebra map, we have $f(P(a_{i_{1}},\ldots, a_{i_{n}}))=P(x_{i_{1}},\ldots, x_{i_{n}})\neq0$. This shows that $P(a_{i_{1}},\ldots, a_{i_{n}})$ is invertible in $A$. So we can extend the above ring map to obtain a ring map $h: k_{0}\rightarrow A$ such that $fh:k_{0}\rightarrow k$ is the inclusion map. This, in particular, shows that $A$ is a $k_{0}$-algebra and $h$ is a $k_{0}$-algebra morphism. \\ 
To extend this ring map $h$ to $k$, since $k$ is an algebraic extension over $k_{0}$, then by Corollary \ref{Coro 3 three}, the extension $k/k_{0}$ is formally étale. This observation allows us to show that the natural map $\Hom_{k_{0}}(k,A) \rightarrow\Hom_{k_{0}}(k, k)$ given by $g'\mapsto fg'$
is bijective where the $\Hom_{k_{0}}(-,-)$ is computed in the category of $k_{0}$-algebras. Indeed, if $h':k\rightarrow k$ is a morphism of $k_{0}$-algebras then we have the following commutative diagram of rings: $$\xymatrix{
k_{0}\ar[r]^{inc\:\:}\ar[d]^{h\:\:\:\:\:\:}
&k\ar[d]^{h'} \\A\ar[r]^{f\:\:}&
k.}$$ Then by hypothesis ($f$ is surjective with nilpotent kernel) and by the infinitesimal lifting property for formally étale maps (see Remark \ref{Remark 3}), there exists a unique ring map $h'':k\rightarrow A$ making the new diagram commute. This shows that $h''$ is a morphism of $k_{0}$-algebras and $h'=fh''$. Hence, the above map is bijective. In particular, there exists a (unique) morphism of $k_{0}$-algebras $g:k\rightarrow A$ which extends $h$ and $fg$ is the identity map. This completes the proof.  
\end{proof}

The key point in the preceding proof is formally smoothness of $k$ over $\mathbb{Q}$, which Lemma \ref{Theorem 4} (and Corollary \ref{Coro 4 dort}) will adapt to any perfect field in place of $\mathbb{Q}$.

We also need a special case of the following result which is about the vanishing of Ext groups: 

\begin{lemma}\label{Lemma 3} Let $M$ and $N$ be modules over a ring $R$. Assume there exists some $r\in R$ such that $r$ is regular on $M$ and $rN=0$. If  $\Ext_{R}^{d}(M/rM,N)=0$ or $\Ext_{R}^{d+1}(M/rM,N)=0$ for some $d\geqslant0$, then $\Ext_{R}^{d}(M,N)=0$.
\end{lemma}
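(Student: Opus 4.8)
The plan is to exploit the regularity of $r$ on $M$ to build a short exact sequence, and then to read the desired vanishing directly off the induced long exact sequence of Ext groups. Since $r$ is regular on $M$, multiplication by $r$ is an injective endomorphism of $M$ with cokernel $M/rM$, so I would start from the short exact sequence
$$\xymatrix{0\ar[r]&M\ar[r]^{r}&M\ar[r]&M/rM\ar[r]&0.}$$
Applying the contravariant functor $\Ext_{R}^{\bullet}(-,N)$ then produces a long exact sequence whose relevant segment is
$$\xymatrix{\Ext_{R}^{d}(M/rM,N)\ar[r]&\Ext_{R}^{d}(M,N)\ar[r]^{\mu}&\Ext_{R}^{d}(M,N)\ar[r]&\Ext_{R}^{d+1}(M/rM,N),}$$
where $\mu$ is the endomorphism of $\Ext_{R}^{d}(M,N)$ induced by multiplication by $r$ on the source copy of $M$.

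The crux of the argument, and the step I expect to require the most care, is to show that $\mu$ is the zero map. The point is that the endomorphism of $\Ext_{R}^{d}(M,N)$ induced by multiplication by $r$ on the first variable coincides with the one induced by multiplication by $r$ on the second variable $N$; this is the standard fact that the two $R$-module structures on an Ext group agree, a formal consequence of the bifunctoriality of $\Ext$. Granting this, the hypothesis $rN=0$ makes multiplication by $r$ on $N$ the zero endomorphism, and hence $\mu=0$ as well.

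Once $\mu=0$ is in hand, the conclusion will be immediate from exactness. Because the kernel of $\mu=0$ is all of $\Ext_{R}^{d}(M,N)$, the preceding map $\Ext_{R}^{d}(M/rM,N)\rightarrow\Ext_{R}^{d}(M,N)$ is surjective; and because the image of $\mu=0$ is zero, the following map $\Ext_{R}^{d}(M,N)\rightarrow\Ext_{R}^{d+1}(M/rM,N)$ is injective. Therefore, if $\Ext_{R}^{d}(M/rM,N)=0$, then surjectivity forces $\Ext_{R}^{d}(M,N)=0$, whereas if $\Ext_{R}^{d+1}(M/rM,N)=0$, then injectivity forces $\Ext_{R}^{d}(M,N)=0$. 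In either case the required vanishing follows.
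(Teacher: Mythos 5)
Your argument is correct and is essentially identical to the paper's: the same short exact sequence $0\to M\xrightarrow{r} M\to M/rM\to 0$, the same segment of the long exact sequence, and the same key observation that the connecting endomorphism is multiplication by $r$, which vanishes because the two $R$-module structures on $\Ext$ agree and $rN=0$ (the paper cites \cite[Tag 00LV]{Johan} for this point, which you instead justify by bifunctoriality). Nothing is missing.
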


\begin{proof} Consider the following short exact sequence of $R$-modules: $$\xymatrix{0\ar[r]&M\ar[r]& M\ar[r]&M/rM
\ar[r]&0}$$ where the map $M\rightarrow M$ is given by $x\mapsto rx$. Then we obtain the following exact sequence: $$\xymatrix{\Ext_{R}^{d}(M/rM,N)\ar[r]&\Ext_{R}^{d}(M,N)
\ar[r]&\Ext_{R}^{d}(M,N)\ar[r]&\Ext_{R}^{d+1}(M/rM,N)}$$ 
where the second map is multiplication by $r$ on the $\Ext_{R}^{d}(M,N)$. But by \cite[Tag 00LV]{Johan}, this map vanishes since $rN=0$, then in either case of the hypothesis, making $\Ext_{R}^{d}(M,N)$ vanish by exactness.
\end{proof}

\begin{corollary}\label{Coro 1} Let $R$ be a PID and $M,N$ be modules over $R$. If there is an element $r\in R$ such that $r$ is regular on $M$ and $rN=0$, then $\Ext_{R}^{1}(M,N)=0$.
\end{corollary}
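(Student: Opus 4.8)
The plan is to deduce this immediately from Lemma \ref{Lemma 3} by exploiting the special homological nature of a PID. Taking $d=1$ in that lemma, the hypotheses $r$ regular on $M$ and $rN=0$ are exactly what is assumed here, so it suffices to verify that \emph{one} of the two groups $\Ext_{R}^{1}(M/rM,N)$ or $\Ext_{R}^{2}(M/rM,N)$ vanishes. I would aim for the second one, since over a PID higher Ext groups vanish for a structural reason that has nothing to do with the particular modules involved.

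The key observation I would record is that a PID is a hereditary ring, i.e.\ it has global dimension at most $1$. The standard justification is that over a PID every submodule of a free module is again free (this is the usual structure-theoretic fact for modules over a PID), and hence every module $L$ admits a free resolution of length at most one: taking a surjection $F_{0}\twoheadrightarrow L$ from a free module, the kernel $K\subseteq F_{0}$ is free, so $0\to K\to F_{0}\to L\to 0$ is a projective (indeed free) resolution. Consequently $\Ext_{R}^{n}(L,-)=0$ for all $n\geqslant2$ and every module $L$.

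Applying this with $L=M/rM$ gives $\Ext_{R}^{2}(M/rM,N)=0$ automatically, with no further computation. This is precisely the second alternative in the hypothesis of Lemma \ref{Lemma 3} for $d=1$, so the lemma yields $\Ext_{R}^{1}(M,N)=0$, as required.

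I do not expect any genuine obstacle here: the entire argument is a one-line application of Lemma \ref{Lemma 3} once the hereditariness of $R$ is invoked. The only point that deserves care is making sure to pick the $\Ext^{d+1}$ branch of the lemma (rather than trying to compute $\Ext^{1}(M/rM,N)$ directly, which need not vanish), so that the PID hypothesis does all the work through the vanishing of $\Ext^{2}$.
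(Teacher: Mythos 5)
Your proposal is correct and follows essentially the same route as the paper: both invoke the vanishing of $\Ext_{R}^{i}$ for $i\geqslant 2$ over a PID (you justify it via hereditariness, i.e.\ submodules of free modules being free) and then apply Lemma \ref{Lemma 3} with $d=1$ through its $\Ext^{d+1}$ branch. No gaps.
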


\begin{proof} It is well known that if $R$ is a PID, then for any two $R$-modules $M$ and $N$ one has $\Ext_{R}^{i}(M,N)=0$ for all $i\geqslant2$. Then apply Lemma \ref{Lemma 3}.
\end{proof}

\begin{remark} The above corollary is quite sufficient for our purpose. However, we should add that this result holds more generally: Let $R$ be a regular Noetherian ring of Krull dimension $n$ and $M,N$ be modules over $R$. If there is an element $r\in R$ such that $r$ is regular on $M$ and $rN=0$, then $\Ext_{R}^{n}(M,N)=0$.
In fact, this is proved by the same argument as the above by applying a theorem of Serre \cite{Serre} which asserts that if $R$ is regular Noetherian of Krull dimension $n$, then for any $R$-modules $M$ and $N$ one has
$\Ext_{R}^{i}(M,N) = 0$ for all $i> n$. 
\end{remark}

\begin{remark}\label{Remark 2} Let $(B_{n})$ be an inverse system of sets indexed by the set of natural numbers. If the transition maps $B_{n+1}\rightarrow B_{n}$ are surjective for all $n$, then for each $d$ it can be easily seen that the projection map $\lim\limits_{\overleftarrow{n\in\mathbb{N}}}
B_{n}\rightarrow B_{d}$ is surjective as well. 
\end{remark}

Now we are ready to prove the first main result of this article: 

\begin{theorem}\label{Theorem 1} Let $(R,M,k)$ be a complete local ring. \\
$\mathbf{(i)}$ If $\Char(k)=0$, then the natural surjective ring map $R\rightarrow k$ admits a splitting. \\
$\mathbf{(ii)}$ If $\Char(k)=p>0$, then the natural surjective group map $R^{\ast}\rightarrow k^{\ast}$ admits a splitting. 
\end{theorem}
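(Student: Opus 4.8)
The plan is to prove both parts by a single inverse-limit mechanism, exploiting completeness $R\cong\varprojlim_{n} R/M^{n}$ to reduce each splitting to a step-by-step lifting across the square-zero extensions $R/M^{n+1}\to R/M^{n}$. Two preliminary observations make the reduction work. First, since each $R/M^{n}$ is local with residue field $k$, an element of $R=\varprojlim_{n}R/M^{n}$ is a unit if and only if its image in $k$ is nonzero, if and only if each of its components is a unit; hence $R^{\ast}\cong\varprojlim_{n}(R/M^{n})^{\ast}$ as abelian groups. Second, by Remark \ref{Remark 1} applied in the category of rings (resp.\ abelian groups) to the object $A=k$ (resp.\ $A=k^{\ast}$) and the inverse system $(R/M^{n})$ (resp.\ $((R/M^{n})^{\ast})$), a section of $R\to k$ (resp.\ of $R^{\ast}\to k^{\ast}$) is precisely a compatible family of sections $g_{n}\colon k\to R/M^{n}$ (resp.\ $s_{n}\colon k^{\ast}\to(R/M^{n})^{\ast}$) of the truncation maps. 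Writing $T_{n}$ (resp.\ $U_{n}$) for the set of such sections at level $n$, it therefore suffices to show that each set is nonempty and that the transition maps $T_{n+1}\to T_{n}$ (resp.\ $U_{n+1}\to U_{n}$) are surjective; then Remark \ref{Remark 2} assembles a compatible family and hence the desired splitting. At the base level $R/M=k$, so the identity is a section and both sets are nonempty for $n=1$.

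For part (i), the hypothesis $\Char(k)=0$ forces $\Char(R)=0$: as $R$ is local, every nonzero integer has nonzero image in $k$, hence lies outside $M$ and is a unit, so $R$ and every quotient $R/M^{n}$ is a $\mathbb{Q}$-algebra. Since $\mathbb{Q}$ is perfect, the extension $k/\mathbb{Q}$ is formally smooth by the introduction's lemma that every field extension over a perfect field is formally smooth. Given a $\mathbb{Q}$-algebra section $g_{n}\colon k\to R/M^{n}$, I would invoke the infinitesimal lifting property (Remark \ref{Remark 3}, smooth version) for the extension $R/M^{n+1}\to R/M^{n}$, whose kernel $M^{n}/M^{n+1}$ is of square zero because $M^{2n}\subseteq M^{n+1}$, to obtain a $\mathbb{Q}$-algebra lift $g_{n+1}\colon k\to R/M^{n+1}$ with $g_{n}=\pi\circ g_{n+1}$. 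As $R/M^{n+1}\to k$ factors through $\pi$, the lift $g_{n+1}$ is again a section, so $T_{n+1}\to T_{n}$ is surjective (Lemma \ref{Lemma 7} already gives $T_{n}\neq\varnothing$ at each level as well). Passing to the limit produces the ring section $g\colon k\to R$.

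For part (ii) every group in sight is abelian. By Lemma \ref{Lemma 2} the kernel of $(R/M^{n+1})^{\ast}\to(R/M^{n})^{\ast}$ is $G_{n}\cong M^{n}/M^{n+1}$, an abelian group annihilated by $p=\Char(k)$. Given a group section $s_{n}\colon k^{\ast}\to(R/M^{n})^{\ast}$, lifting it along $1\to G_{n}\to(R/M^{n+1})^{\ast}\to(R/M^{n})^{\ast}\to 1$ is possible exactly when the pullback extension of $k^{\ast}$ by $G_{n}$ splits, and the obstruction is its class in $\Ext_{\mathbb{Z}}^{1}(k^{\ast},G_{n})$. I would annihilate this obstruction with Corollary \ref{Coro 1}, taking the PID to be $\mathbb{Z}$, the modules to be $k^{\ast}$ and $G_{n}$, and the element to be $r=p$: indeed $pG_{n}=0$, while $p$ acts regularly on $k^{\ast}$ since the $p$-power map on $k^{\ast}$ is injective (a field of characteristic $p$ has trivial $p$-torsion, as recorded after Lemma \ref{Lemma 2}). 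Thus $\Ext_{\mathbb{Z}}^{1}(k^{\ast},G_{n})=0$, the lift $s_{n+1}$ exists, $U_{n+1}\to U_{n}$ is surjective, and the compatible family yields the group splitting of $R^{\ast}\to k^{\ast}$.

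The main obstacle in both parts is the surjectivity of the transition maps, that is, upgrading the mere existence of sections at each finite level to a system compatible across all levels. In part (i) this is exactly where formal smoothness of $k/\mathbb{Q}$—rather than the bare section-existence of Lemma \ref{Lemma 7}—is indispensable, and in part (ii) it is where the vanishing of $\Ext_{\mathbb{Z}}^{1}(k^{\ast},G_{n})$, resting on the regularity of $p$ on $k^{\ast}$, does the work. Everything else is the formal bookkeeping of the identifications $R^{\ast}\cong\varprojlim_{n}(R/M^{n})^{\ast}$ and the Hom--inverse-limit interchange of Remark \ref{Remark 1}, together with the square-zero computation $M^{2n}\subseteq M^{n+1}$.
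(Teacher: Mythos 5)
Your proposal is correct, and part (ii) is essentially the paper's argument: the same reduction to surjectivity of $\Hom_{\mathbb{Z}}(k^{\ast},(R/M^{n+1})^{\ast})\rightarrow\Hom_{\mathbb{Z}}(k^{\ast},(R/M^{n})^{\ast})$, killed by the same application of Corollary \ref{Coro 1} with $r=p$ regular on $k^{\ast}$ and annihilating $G_{n}\simeq M^{n}/M^{n+1}$ via Lemma \ref{Lemma 2}. In part (i) you keep the paper's inverse-limit skeleton (Remarks \ref{Remark 1} and \ref{Remark 2}, square-zero kernels from $M^{2n}\subseteq M^{n+1}$) but replace the key step: the paper, given an \emph{arbitrary} ring map $h:k\rightarrow R/M^{n}$, forms the pullback $A=R/M^{n+1}\times_{R/M^{n}}k$, checks that $A\rightarrow k$ is a square-zero surjection of $\mathbb{Q}$-algebras, and invokes Lemma \ref{Lemma 7} to produce a section of $A\rightarrow k$; you instead lift only \emph{sections} $g_{n}:k\rightarrow R/M^{n}$ directly across $R/M^{n+1}\rightarrow R/M^{n}$ by the infinitesimal lifting property, using that $k/\mathbb{Q}$ is formally smooth (Lemma \ref{Theorem 4}). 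Your route is shorter and bypasses both the pullback and the transcendence-basis bookkeeping inside Lemma \ref{Lemma 7}; note that lifting a map $k\rightarrow R/M^{n}$ along the square-zero surjection $R/M^{n+1}\rightarrow R/M^{n}$ is literally the same lifting problem that the pullback encodes, so the two are interchangeable here. What the paper's detour buys is self-containedness: Lemma \ref{Lemma 7} needs only formal \'etaleness of separable algebraic extensions (built up from finite ones by direct limits) plus an explicit treatment of a transcendence basis, whereas your argument imports the stronger external input that every field extension of a perfect field is formally smooth; the paper reserves that heavier tool (via Corollary \ref{Coro 4 dort}) for the equicharacteristic-$p$ case in Theorem \ref{Theorem 2}, where your formal-smoothness argument would in fact apply verbatim with $\mathbb{F}_{p}$ in place of $\mathbb{Q}$. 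All the supporting identifications you make ($R^{\ast}\simeq\varprojlim\,(R/M^{n})^{\ast}$, sections of $R\rightarrow k$ as compatible families of sections, lifts of sections being sections) are sound.
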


\begin{proof} (i): It suffices to show that the natural map: $$\Hom(k,R/M^{n+1})\rightarrow\Hom(k,R/M^n)$$ given by $g\mapsto\pi g$
is surjective for all $n\geqslant1$, where $\pi:R/M^{n+1}\rightarrow R/M^{n}$ is the natural ring map and the $\Hom(-,-)$ is computed in the category of commutative rings. Suppose we have proven this, then using Remarks \ref{Remark 1} and \ref{Remark 2}, the natural map: $$\Hom(k,R)=
\lim\limits_{\overleftarrow{n\geqslant1}}
\Hom(k,R/M^{n})\rightarrow
\Hom(k,k)$$ given by $g\mapsto \pi' g$ 
is surjective as well where $\pi': R\rightarrow k$ is the natural ring map, so we can lift the identity map $k\rightarrow k$ to get the desired splitting. \\
To prove the above surjectivity, pick a ring map $h:k\rightarrow R/M^{n}$ where $n\geqslant1$ is fixed. Let $A:=R/M^{n+1}\times_{R/M^{n}}k$ be the pullback of the natural ring map $\pi:R/M^{n+1}\rightarrow R/M^{n}$ and $h$ in the category of commutative rings. We have then the following commutative diagram of rings: $$\xymatrix{
A\ar[r]^{f'\:\:\:\:\:\:\:}\ar[d]^{f\:\:\:\:\:\:}
&R/M^{n+1}\ar[d]^{\pi} \\ k\ar[r]^{h\:\:\:\:\:\:}&
R/M^{n}}$$
where $f$ and $f'$ are the projection maps. The kernel of $f$ has square-zero. Indeed, it is enough to show that $xy=0$ for all $x=(a+M^{n+1}, a'+M)$ and $y=(b+M^{n+1}, b'+M)$ in $\Ker(f)$. We have $f(x)=a'+M=0$ and so $a'\in M$, but $x\in A$ and so $a+M^{n}=h(a'+M)=0$, this shows that $a\in M^{n}$. Similarly, we have $b'\in M$ and $b\in M^{n}$.
Then $ab\in M^{2n}\subseteq M^{n+1}$
and so $xy=(ab+M^{n+1},a'b'+M)=0$. \\
The map $f$ is surjective, because the natural ring map $R/M^{n+1}\rightarrow R/M^{n}$ is surjective. \\ 
In addition, $A$ is a $\mathbb{Q}$-algebra. To see this, it suffices to show that the image of every nonzero integer $d$ under the natural (injective) ring map $\mathbb{Z}\rightarrow A$ is invertible in $A$. We have $d\cdot1_{A}=(d\cdot1_{R}+M^{n+1},d\cdot1_{R}+M)$. Since $k=R/M$ is a field of characteristic zero, 
$d\cdot1_{R}+M$ is invertible in $k$, and so 
$d\cdot1_{R}+M^{n}=h(d\cdot1_{R}+M)$ is invertible in $R/M^{n}$. Then, by the proof of Lemma \ref{Lemma 1},  $d\cdot1_{R}+M^{n+1}\in(R/M^{n+1})^{\ast}$. Thus $d\cdot1_{A}\in A^{\ast}$. \\
Also, the projection map $f:A\rightarrow k$ is a $\mathbb{Q}$-algebra map, because if $m/d\in\mathbb{Q}$ then $(f\pi_{A})(m/d)=f((m\cdot1_{A})(d\cdot1_{A})^{-1})=
(m\cdot1_{R}+M)(d\cdot1_{R}
+M)^{-1}=(m\cdot1_{k})(d\cdot1_{k})^{-1}=\pi_{k}(m/d)$ where $\pi_{A}:\mathbb{Q}\rightarrow A$ and $\pi_{k}:\mathbb{Q}\rightarrow k$ are the natural ring maps.  \\
Then by Lemma \ref{Lemma 7}, there is a ring map $g:k\rightarrow A$ such that $fg:k\rightarrow k$ is the identity map. Then the composition of the projection map $f':A\rightarrow R/M^{n+1}$ with $g$ gives us the desired ring map $k\rightarrow R/M^{n+1}$. \\
(ii): First note that as $R$ is complete, the natural ring map $R\rightarrow
\lim\limits_{\overleftarrow{n\geqslant1}}R/M^{n}$ is an isomorphism, and thus the induced map between the groups of units is an isomorphism of groups. We also have $(\lim\limits_{\overleftarrow{n\geqslant1}}R/M^{n})^{\ast}=\lim\limits_{\overleftarrow{n\geqslant1}}(R/M^{n})^{\ast}$. Now to prove the assertion, similar to above, it suffices to show that the natural map: $$\Hom_{\mathbb{Z}}(k^{\ast},(R/M^{n+1})^{\ast})\rightarrow
\Hom_{\mathbb{Z}}(k^{\ast},(R/M^{n})^{\ast})$$ given by $g\mapsto\pi^{\ast}g$
is surjective for all $n\geqslant1$ where $\pi:R/M^{n+1}\rightarrow R/M^{n}$ is the natural ring map and the $\Hom_{\mathbb{Z}}(-,-)$ is computed in the category of Abelian groups. Suppose we have proven this, then using Remarks \ref{Remark 1} and \ref{Remark 2}, the natural map: $$\Hom_{\mathbb{Z}}(k^{\ast},R^{\ast})=
\lim\limits_{\overleftarrow{n\geqslant1}}
\Hom_{\mathbb{Z}}(k^{\ast},(R/M^{n})^{\ast})\rightarrow
\Hom_{\mathbb{Z}}(k^{\ast},k^{\ast})$$ is surjective as well, so we can lift the identity map $k^{\ast}\rightarrow k^{\ast}$ to get the desired splitting. To prove the above surjectivity, by general homological algebra, it suffices to show that $\Ext^1_{\mathbb{Z}}(k^{\ast}, G)=0$, where $G= (1+M^{n})/(1+M^{n+1})$. Indeed, using Lemma \ref{Lemma 2}, from the following short exact sequence of Abelian groups: $$\xymatrix{1\ar[r]& G
\ar[r]&(R/M^{n+1})^{\ast}
\ar[r]&(R/M^{n})^{\ast}\ar[r]&1}$$ we obtain the following exact sequence of Abelian groups:  $$\xymatrix{\Hom_{\mathbb{Z}}(k^{\ast},
(R/M^{n+1})^{\ast})\ar[r]&\Hom_{\mathbb{Z}}(k^{\ast},
(R/M^{n})^{\ast})\ar[r]& 
\Ext_{\mathbb{Z}}^{1}(k^{\ast},
G).}$$ 
But $k^{\ast}$ has trivial $p$-torsion. Also, by Lemma \ref{Lemma 2}, the Abelian group $G$ is vanished by $p$. Then by Corollary \ref{Coro 1}, we have $\Ext_{\mathbb{Z}}^{1}(k^{\ast},G)=0$. This completes the proof. 
\end{proof}

As an application of the above theorem, we have the following conclusion:

\begin{corollary}\label{Coro 2} Let $(R,M,k)$ be a complete local ring. Then the following exact sequence of Abelian groups: $$\xymatrix{1\ar[r]&1+M\ar[r]& R^{\ast}\ar[r]&k^{\ast}
\ar[r]&1}$$ is split. 
In particular, we have an isomorphism of Abelian groups $R^{\ast}\simeq (1+M) \times k^{\ast}$.
\end{corollary}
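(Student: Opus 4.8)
The plan is to deduce this directly from Theorem \ref{Theorem 1}, treating the corollary as a formal bookkeeping consequence. First I would record that the displayed sequence is exact: applying Lemma \ref{Lemma 1} with $I=M$, which is the Jacobson radical of the local ring $R$, yields exactness. Hence the whole content of the statement reduces to producing a group-theoretic section of the natural surjection $R^{\ast}\to k^{\ast}$.

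To build such a section I would split into the two cases dictated by the characteristic of the residue field $k$, which align precisely with the two parts of Theorem \ref{Theorem 1}. If $\Char(k)=0$, part (i) furnishes a ring section $g:k\to R$ of the natural map $R\to k$. I would then observe that $g$ automatically restricts to unit groups: for $u\in k^{\ast}$ one has $g(u)\,g(u^{-1})=g(1)=1$, so $g(k^{\ast})\subseteq R^{\ast}$, and the resulting group homomorphism $k^{\ast}\to R^{\ast}$ sections $R^{\ast}\to k^{\ast}$, because $g$ already sections $R\to k$. If instead $\Char(k)=p>0$, part (ii) directly supplies a group section of $R^{\ast}\to k^{\ast}$, with no detour through ring maps. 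Since every field has characteristic either $0$ or a prime, these cases are exhaustive, so in all cases the surjection $R^{\ast}\to k^{\ast}$ splits.

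Finally I would invoke the standard fact that a split short exact sequence of Abelian groups $1\to A\to B\to C\to 1$, with section $s:C\to B$, gives an internal direct product $B=A\cdot s(C)$ with $A\cap s(C)=1$, hence $B\simeq A\times C$. Applied to our situation, with $A=1+M$ and $C=k^{\ast}$, this yields $R^{\ast}\simeq (1+M)\times k^{\ast}$.

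I do not anticipate a genuine obstacle: all the technical weight rests in Theorem \ref{Theorem 1}, and this corollary is essentially a matter of organizing the two cases. The only points requiring a moment's care are the case division itself—passing from a ring section to a unit-group section when $\Char(k)=0$, versus already having the unit-group section in hand when $\Char(k)=p$—and the one-line verification that a ring section restricts to the units.
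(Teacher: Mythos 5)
Your proposal is correct and follows essentially the same route as the paper, which simply cites Theorem \ref{Theorem 1}; you have merely made explicit the routine details (exactness via Lemma \ref{Lemma 1}, the observation that a ring section restricts to a section on unit groups in the characteristic-zero case, and the standard splitting-implies-direct-product fact) that the paper leaves implicit.
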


\begin{proof}  This follows from Theorem \ref{Theorem 1}. 
\end{proof}

\begin{corollary} If a local ring $(R,M,k)$ is Artinian or more generally its Jacobson radical is nilpotent, then the following exact sequence of Abelian groups: $$\xymatrix{1\ar[r]&1+M\ar[r]& R^{\ast}\ar[r]&k^{\ast}
\ar[r]&1}$$ is split. In particular, we have an isomorphism of groups $R^{\ast}\simeq (1+M) \times k^{\ast}$.
\end{corollary}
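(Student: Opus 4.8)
The plan is to reduce this statement entirely to Corollary \ref{Coro 2}, which already settles the split-exactness for every complete local ring. Thus the only point to establish is that the present hypotheses force $(R,M,k)$ to be complete; the conclusion, including the isomorphism $R^{\ast}\simeq(1+M)\times k^{\ast}$, then follows immediately.

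First I would observe that in any local ring the Jacobson radical coincides with the unique maximal ideal $M$, so the hypothesis that the Jacobson radical be nilpotent is exactly the condition that $M$ be nilpotent. For the Artinian case I would invoke the standard fact that the maximal ideal of an Artinian local ring is nilpotent; hence that case is subsumed by the more general one, and it suffices to treat a local ring $(R,M,k)$ with $M^{n}=0$ for some $n\geqslant1$.

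Next I would check that such a ring is complete. This is precisely the content of Remark \ref{Remark 5 five}, but it is also immediate directly: if $M^{n}=0$ then $M^{d}=0$ for all $d\geqslant n$, so $R/M^{d}=R$ for every such $d$ and the transition maps of the inverse system $(R/M^{d})_{d\geqslant1}$ are eventually identities. Consequently the natural ring map $R\rightarrow\lim\limits_{\overleftarrow{d\geqslant1}}R/M^{d}$ is an isomorphism, i.e.\ $R$ is a complete local ring.

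With completeness established, I would simply apply Corollary \ref{Coro 2} to obtain the desired splitting and the product decomposition. I do not expect any genuine obstacle here: all the substantive work has already been done in Theorem \ref{Theorem 1} and its corollary, and the present statement is merely the observation that the ``nilpotent radical'' hypothesis is a special case of completeness.
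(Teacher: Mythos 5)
Your proposal is correct and follows exactly the paper's own route: the paper's proof is the one-line "This follows from Corollary \ref{Coro 2} and Remark \ref{Remark 5 five}," i.e.\ precisely the reduction you carry out. Your added direct verification that a nilpotent maximal ideal forces completeness (the transition maps $R/M^{d+1}\rightarrow R/M^{d}$ are eventually identities) is a valid expansion of what Remark \ref{Remark 5 five} asserts.
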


\begin{proof} This follows from Corollary \ref{Coro 2}  and Remark \ref{Remark 5 five}.
\end{proof}

\begin{example}\label{Example 1} We give an example of an (incomplete) local ring $(R,M,k)$ such that the following exact sequence of Abelian groups: $$\xymatrix{1\ar[r]&1+M\ar[r]& R^{\ast}\ar[r]&k^{\ast}
\ar[r]&1}$$ is not split. First note that if $M$ is a maximal ideal of a ring $R$ then the rings $R/M^{n}$ and $R_{M}/M^{n}R_{M}$ are naturally isomorphic for all $n\geqslant0$. \\
Now let $p$ be a prime number. Consider the localization ring $\mathbb{Z}_{P}$ where $P=p\mathbb{Z}$. By the above, $\mathbb{Z}_{P}/P\mathbb{Z}_{P}\simeq\mathbb{F}_{p}
=\mathbb{Z}/p$. Then the natural surjective map between the groups of units $(\mathbb{Z}_{P})^{\ast}\rightarrow \mathbb{F}_{p}^{\ast}$ does not admit a splitting for $p\geqslant5$, because if this map splits then $\mathbb{F}_{p}^{\ast}$ can be viewed as a subgroup of $(\mathbb{Z}_{P})^{\ast}$, but $\mathbb{Z}_{P}$ does not contain roots of unit of order $p-1$. \\
But for $p=2,3$ the map $(\mathbb{Z}_{P})^{\ast}\rightarrow \mathbb{F}_{p}^{\ast}$ admits a splitting. We know that for any prime number $p$, the local ring $\mathbb{Z}_{P}$ is not complete. This shows that the above short exact sequence may be split for some incomplete local rings (in other words, the converse of Corollary \ref{Coro 2} is not true).    
\end{example}

The following result allows us to prove Lemma \ref{Lemma B}, especially in the positive characteristic case.   

\begin{lemma}\label{Theorem 4} Every field extension over a perfect field is formally smooth. 
\end{lemma}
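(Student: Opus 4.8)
The plan is to reduce the statement to the characterization of separability recorded in the discussion after Definition \ref{Def 1}: an extension of fields $K/F$ is formally smooth if and only if it is separable in the sense of Definition \ref{Def 1}. Thus it suffices to prove that every extension $K/F$ with $F$ perfect is separable, i.e.\ that every finitely generated subextension $F\subseteq K'\subseteq K$ admits a separating transcendence basis. Since separability is by definition a property of the finitely generated subextensions, I would fix such a $K'=F(x_{1},\dots,x_{n})$ and produce a separating transcendence basis drawn from among its generators.

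First I would dispose of the characteristic zero case: there every algebraic extension is separable, so any transcendence basis of $K'/F$ is automatically separating and $K'/F$ is separable. This reduces the problem to $\Char(F)=p>0$, where perfectness means precisely that the Frobenius is surjective, i.e.\ $F=F^{p}$.

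For the positive characteristic case I would argue by induction on the number of generators $n$, the base case $n=\operatorname{trdeg}_{F}K'$ being immediate since then the generators are algebraically independent and form a (trivially separating) transcendence basis. When $n$ exceeds the transcendence degree, the generators satisfy a nontrivial algebraic relation, so after reindexing there is an irreducible polynomial $f\in F[X_{1},\dots,X_{m}]$ with $f(x_{1},\dots,x_{m})=0$ involving a transcendence basis together with one further generator. The decisive step is to show that $f$ cannot lie in $F[X_{1}^{p},\dots,X_{m}^{p}]$: if it did, then writing each coefficient as a $p$th power (possible because $F=F^{p}$) and using the Frobenius identity for polynomials would exhibit $f$ as a $p$th power, contradicting irreducibility. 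Consequently some variable $X_{j}$ occurs with an exponent prime to $p$, so $\partial f/\partial X_{j}\neq 0$, and $x_{j}$ is therefore separably algebraic over the subfield generated by the remaining variables. Recognizing those remaining generators as a transcendence basis over which $x_{j}$ is separable lowers the effective generator count, and the induction then yields a separating transcendence basis contained in $\{x_{1},\dots,x_{n}\}$.

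The main obstacle I anticipate is the bookkeeping in the inductive step: keeping track of which generators constitute the transcendence basis, ensuring via a Gauss-type argument that $f$ remains irreducible when reinterpreted as a polynomial in $X_{j}$ over the field generated by the other variables (so that the minimal polynomial of $x_{j}$ is genuinely separable), and confirming that discarding $x_{j}$ strictly decreases the induction parameter. The essential field-theoretic content, however, is concentrated entirely in the single observation that perfectness ($F=F^{p}$) forbids an irreducible relation from being a polynomial in the $p$th powers of all of its variables; once this is in hand, the existence of a separating transcendence basis for each finitely generated $K'/F$, and hence formal smoothness of $K/F$ via the cited equivalence, follows routinely.
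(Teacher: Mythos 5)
Your proposal is correct and follows essentially the same route as the paper: reduce formal smoothness to separability in the sense of Definition \ref{Def 1} via the cited equivalence (Stacks Project Tag 0320), and then establish that every finitely generated extension of a perfect field admits a separating transcendence basis. The only difference is that the paper simply cites Mac Lane and Zariski--Samuel for this last fact, whereas you sketch its standard proof (induction on generators, using $F=F^{p}$ to rule out an irreducible relation lying in $F[X_{1}^{p},\dots,X_{m}^{p}]$), and that sketch, including the Gauss-lemma and exchange-of-basis bookkeeping you flag, is the classical argument and is sound.
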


\begin{proof} Let $k/F$ be an extension of fields with $F$ a perfect field. It is well known that every finitely generated field extension over a perfect field has a separating transcendence basis (for its proof see \cite[Theorem 1]{Mac Lane} or 
\cite[Chap. II, \S13, Theorem 31]{Zariski Samuel}). This shows that the extension $F\subseteq k$ is separable (in the sense of Definition \ref{Def 1}), and so by \cite[Tag 0320]{Johan} or by \cite[\S19, Theorem 19.6.1]{Grothendieck}, this extension is formally smooth. As a second proof, combine Theorems 26.3 and 26.9  from Matsumura's book \cite{Matsumura}.
\end{proof}

\begin{corollary} In the characteristic zero, every extension of fields is formally smooth.
\end{corollary}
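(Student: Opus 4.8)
The plan is to reduce the statement directly to Lemma \ref{Theorem 4} by observing that every field of characteristic zero is perfect. First I would recall the definition: a field $F$ is perfect if either $\Char(F)=0$, or $\Char(F)=p>0$ and the Frobenius endomorphism $x\mapsto x^{p}$ is surjective. In characteristic zero the first alternative holds automatically, so every such field is perfect without any further verification.

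Given an arbitrary extension of fields $k/F$ in characteristic zero, the base field $F$ is then perfect by this remark (note that since $F\subseteq k$, the two fields share the prime field $\mathbb{Q}$, so the characteristic-zero hypothesis on the extension is exactly the hypothesis $\Char(F)=0$). At this point Lemma \ref{Theorem 4}, which asserts that every field extension over a perfect field is formally smooth, applies verbatim with this $F$ as the perfect base field, and yields that $k/F$ is formally smooth. No argument is required beyond recognizing the perfectness of $F$.

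I do not anticipate any genuine obstacle here, since the corollary is an immediate specialization of the preceding lemma. All the real work has already been carried out in the proof of Lemma \ref{Theorem 4}, namely the existence of a separating transcendence basis for finitely generated extensions over a perfect field and the consequent passage to formal smoothness via \cite[Tag 0320]{Johan}. The only content added by the corollary is the elementary observation that characteristic-zero fields are perfect, so the proof is essentially a one-line invocation of Lemma \ref{Theorem 4}.
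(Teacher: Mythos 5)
Your proposal is correct and coincides with the paper's own proof: both observe that every field of characteristic zero is perfect and then invoke Lemma \ref{Theorem 4} directly. Nothing further is needed.
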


\begin{proof} Every field of characteristic zero is perfect. Then apply Lemma \ref{Theorem 4}.
\end{proof}

\begin{corollary}\label{Coro 4 dort} Let $k/F$ be an extension of fields with $F$ a perfect field. If $f:A\rightarrow k$ is a surjection of $F$-algebras with nilpotent kernel, then there is a ring section $g:k\rightarrow A$.   
\end{corollary}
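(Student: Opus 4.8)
The plan is to obtain the section as a single lift of the identity map of $k$ along $f$, using that $k$ is formally smooth over $F$; in this way essentially all of the difficulty is already packaged into Lemma~\ref{Theorem 4}. Since $F$ is a perfect field, that lemma guarantees that the structure map $F\to k$ is formally smooth. I would view $A$ as an $F$-algebra through its structure map, put $I=\Ker(f)$ (nilpotent by hypothesis), and note that $f$ induces an isomorphism of $F$-algebras $A/I\xrightarrow{\sim} k$, under which $f$ becomes the quotient map $A\to A/I$.

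With these identifications the situation is exactly an infinitesimal lifting problem: I would set up the commutative square
$$\xymatrix{
F\ar[r]\ar[d] & A\ar[d]^{f} \\
k\ar[r]^{\mathrm{id}}\ar@{-->}[ur]^{g} & k
}$$
whose outer square commutes because both composites equal the structure map $F\to k$. Formal smoothness of $F\to k$, applied to the surjection $f:A\to k$ with nilpotent kernel, then furnishes the dashed $F$-algebra homomorphism $g:k\to A$ making the lower triangle commute, i.e.\ $fg=\mathrm{id}_{k}$, which is precisely the desired section.

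The one technical point to be careful about is that the definition of a formally smooth map only asserts liftability against square-zero ideals, while our $I=\Ker(f)$ is only nilpotent. This gap is closed by Remark~\ref{Remark 3}, where the infinitesimal lifting property for formally smooth maps is extended from square-zero to nilpotent ideals by factoring through the successive quotients $A/I^{2},A/I^{4},\dots$ until $I^{n}=0$. Unlike the formally étale case, I neither expect nor need uniqueness of $g$ here; existence of one section suffices. Thus the only real obstacle lies in Lemma~\ref{Theorem 4} itself, and once formal smoothness is available this corollary is a one-step application---generalizing Lemma~\ref{Lemma 7} (the case $F=\mathbb{Q}$) by replacing its explicit transcendence-basis construction with the single appeal to formal smoothness.
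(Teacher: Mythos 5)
Your proposal is correct and follows essentially the same route as the paper: invoke Lemma~\ref{Theorem 4} for formal smoothness of $k/F$, use Remark~\ref{Remark 3} to upgrade the lifting property from square-zero to nilpotent kernels, and lift $\mathrm{id}_{k}$ along $f$ to obtain the section. Your explicit note that uniqueness is neither available nor needed in the formally smooth case is a helpful clarification that the paper leaves implicit.
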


\begin{proof} By Lemma \ref{Theorem 4}, the extension $k/F$ is formally smooth. Using this observation and Remark \ref{Remark 3}, we get that the natural map $\Hom_{F}(k,A)\rightarrow
\Hom_{F}(k,k)$ given by $g'\mapsto fg'$ is surjective where the $\Hom_{F}(-,-)$ is computed in the category of $F$-algebras. In particular, the identity map $k\rightarrow k$ can be lifted to get the desired splitting.
\end{proof}

\begin{corollary}\label{Lemma 8} Let $k$ be a field of characteristic $p>0$. If $f:A\rightarrow k$ is a surjection of $\mathbb{F}_{p}$-algebras with nilpotent kernel, then there is a ring section $g:k\rightarrow A$.   
\end{corollary}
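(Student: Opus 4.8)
The plan is to recognize this statement as the special case $F=\mathbb{F}_{p}$ of Corollary \ref{Coro 4 dort}. The only thing that needs checking is that the prime field $\mathbb{F}_{p}$ qualifies as the perfect base field required there, after which the earlier corollary does all the work.

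First I would verify that $\mathbb{F}_{p}$ is perfect. Its Frobenius endomorphism $x\mapsto x^{p}$ is injective, being a nonzero ring homomorphism of a field, and hence surjective since $\mathbb{F}_{p}$ is finite; thus every element of $\mathbb{F}_{p}$ admits a $p$-th root and $\mathbb{F}_{p}$ is perfect. Next I would observe that the hypotheses of the present statement are precisely the hypotheses of Corollary \ref{Coro 4 dort} with this choice of base field: since $\Char(k)=p>0$, the prime subring of $k$ is $\mathbb{F}_{p}$, so $k/\mathbb{F}_{p}$ is a field extension over the perfect field $\mathbb{F}_{p}$, while $A$ is an $\mathbb{F}_{p}$-algebra and $f:A\rightarrow k$ is a surjection of $\mathbb{F}_{p}$-algebras with nilpotent kernel by assumption.

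Finally I would invoke Corollary \ref{Coro 4 dort} directly with $F=\mathbb{F}_{p}$ to produce the ring section $g:k\rightarrow A$. I do not expect any genuine obstacle: all the substance has already been carried out in Lemma \ref{Theorem 4} (the formal smoothness of any field extension over a perfect field, applied to $k/\mathbb{F}_{p}$) and in Corollary \ref{Coro 4 dort} itself, and this corollary merely specializes the perfect field $F$ to the prime field $\mathbb{F}_{p}$. In this sense it is the positive-characteristic analogue of Lemma \ref{Lemma 7}, where the characteristic-zero case used $\mathbb{Q}$ in place of $\mathbb{F}_{p}$.
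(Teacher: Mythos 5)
Your proposal is correct and follows exactly the paper's own argument: the paper likewise deduces this as the special case $F=\mathbb{F}_{p}$ of Corollary \ref{Coro 4 dort}, justified by the fact that every finite field is perfect. Your additional checks (that the prime subring of $k$ is $\mathbb{F}_{p}$ and that Frobenius is bijective on $\mathbb{F}_{p}$) are fine and merely make explicit what the paper leaves implicit.
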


\begin{proof} This is a special case of Corollary \ref{Coro 4 dort} (because every finite field is a perfect field).
\end{proof} 

\begin{remark} It is clear that Lemma \ref{Lemma 7} is also a special case of Corollary \ref{Coro 4 dort}. However, the original proof of Lemma \ref{Lemma 7} has the important advantage of using less machinery. Also note that the original proof of Lemma \ref{Lemma 7} no longer works to derive Corollary \ref{Lemma 8}. 
In fact, the only key point related with the characteristic used in proving Lemma \ref{Lemma 7} was that, in the characteristic zero, every algebraic extension of fields is separable. But this does not necessarily hold in the positive characteristic case. For instance,   
a finite extension of the rational function field $\mathbb{F}_{p}(x)$ is not necessarily separable with $p$ a prime number. For example, the finite extension $\mathbb{F}_{p}(x)\subseteq\mathbb{F}_{p}(x^{1/p})$ is inseparable, because the minimal polynomial of $x^{1/p}$ over $\mathbb{F}_{p}(x)$ has repeated roots: $T^{p}-x=(T-x^{1/p})^{p}$.   
\end{remark}

Let $(R,M,k)$ be a local ring. If $\Char(R)=\Char(k)$ then $\Char(k)=0$ or $\Char(k)=p$ for some prime number $p$. However, if $\Char(R)\neq\Char(k)$ then $\Char(k)$ is always a prime number $p$, but $\Char(R)=0$ or $\Char(R)=p^{n}$ with $n\geqslant2$.

The following result is the most general version of Theorem \ref{Theorem 1}. 

\begin{theorem}\label{Theorem 2} Let $(R,M,k)$ be a complete local ring. Then the natural surjective ring map $R\rightarrow k$ admits a splitting if and only if  $\Char(R)=\Char(k)$. In addition, if $\Char(R)\neq\Char(k)$, then the natural surjective map between the groups of units $R^{\ast}\rightarrow k^{\ast}$ admits a splitting.
\end{theorem}

\begin{proof} Using Theorem \ref{Theorem 1},  it remains  only to show that if $\Char(R)=\Char(k)=p$ for some prime number $p$, then the natural surjective ring map $R\rightarrow k$ admits a splitting. To prove this, similar to the proof of Theorem \ref{Theorem 1}(i),  it suffices to show that the natural map: $$\Hom(k,R/M^{n+1})\rightarrow\Hom(k,R/M^n)$$ is surjective for all $n\geqslant1$ where the $\Hom(-,-)$ is computed in the category of commutative rings. \\
Take a ring map $h: k\rightarrow R/M^{n}$ where $n\geqslant1$ is fixed, and let $A:=R/M^{n+1}\times_{R/M^{n}}k$ be the pullback of the natural ring map $R/M^{n+1}\rightarrow R/M^{n}$ and $h$ in the category of commutative rings. Then we show that the projection map $f:A\rightarrow k$ satisfies the conditions of Corollary \ref{Lemma 8}. As proved in Theorem \ref{Theorem 1}(i), it can be seen that $f$ is surjective and its kernel is of square-zero. \\
We also need to show that $A$ is an $\mathbb{F}_{p}$-algebra. Since $\Char(R)=p$, it can be easily seen that $\Char(R/M^{d})=p$ for all $d\geqslant1$. In addition, since the characteristic of a finite direct product of (commutative) rings is indeed the least common multiple (lcm) of the characteristics of the factor rings, and since any extension of rings have the same characteristics, thus $\Char(A)=p$, i.e., $A$ is an $\mathbb{F}_{p}$-algebra. Similarly, as proved in Theorem \ref{Theorem 1}(i), the projection map $f$ is a morphism of $\mathbb{F}_{p}$-algebras. \\
Then by Corollary \ref{Lemma 8} (or by Corollary \ref{Coro 4 dort}), there is a ring map $g:k\rightarrow A$ such that $fg:k\rightarrow k$ is the identity map. Now the composition of the projection map $A\rightarrow R/M^{n+1}$ with $g$ gives us the desired ring map $k\rightarrow R/M^{n+1}$. This completes the proof.  
\end{proof} 

Recall that a local ring $(R,M,k)$ is said to be have a coefficient field if there is a ring map $K\rightarrow R$ (with $K$ a field) such that $K$ maps isomorphically to $k$ under the natural ring map $R\rightarrow k$. Then we have the following result: 

\begin{lemma}\label{Lemma unb 2026} A local ring $(R,M,k)$ has a coefficient field $K$ if and only if the natural ring map $R\rightarrow k$ admits a splitting. In this case,  $R$ is equi-characteristic, i.e., $\Char(R)=\Char(k)$. 
\end{lemma}

\begin{proof} If $R$ has a coefficient field $K$, then we have a ring map $g:K\rightarrow R$ such that $h:=fg$ is an isomorphism where $f:R\rightarrow k$ is the natural ring map. Then $1=hh^{-1}=f(gh^{-1})$ and so we have a section $gh^{-1}:k\rightarrow R$. Conversely, asume there is a ring map $g:k\rightarrow R$ such that $fg: k\rightarrow k$ is the identity map. Then the map $f\circ i: K=\Image(g)\rightarrow k$ is an isomorphism where $i: K\rightarrow R$ is the inclusion (ring) map. Therefore, $K$ is a coefficient field of $R$. 
\end{proof} 

We show that the converse is also true for complete local rings (which completes the proof of Theorem \ref{Theorem A}): 

\begin{corollary} For a complete local ring $(R,M,k)$ the following statements are equivalent: \\
$\mathbf{(i)}$ $R$ has a coefficient field. \\
$\mathbf{(ii)}$ The natural ring map $R\rightarrow k$ admits a splitting. \\
$\mathbf{(iii)}$ $R$ is equi-characteristic.
\end{corollary}

\begin{proof} For (i)$\Leftrightarrow$(ii)  see Lemma \ref{Lemma unb 2026}, and for
(ii)$\Leftrightarrow$(iii) see Theorem \ref{Theorem 2}.
\end{proof}

Next, we use one of our main results to obtain an interesting observation. First, we need the following two lemmas: 

\begin{lemma}\label{Lemma 12 2026} If $I$ is a finitely generated ideal of a ring $R$, then for each $m\geqslant1$ the kernel of the natural ring map $S:=\lim\limits_{\overleftarrow{n\geqslant1}}R/I^{n}
\rightarrow R/I^m$ is exactly $I^{m}S$. In particular, the ring $S$ is complete with respect to the $IS$-adic topology. 
\end{lemma}

\begin{proof} We provide a new and quite elementary proof of this classical result (no need for the exactness of completion on short exact sequences or Artin-Rees style lemmas): it is clear that for any ideal $I$ (not necessarily finitely generated), then $I^{m}S$ is contained in the kernel of the above map. We prove the reverse inclusion using induction on $m$. To see the induction step ($m=1$), take $x=(a_n +I^{n})\in\Ker(S\rightarrow R/I)$. Then $a_{1}\in I=(f_{1},\cdots, f_k)$ and so we may write $a_{1}=\sum\limits_{i=1}^{k}f_{i}c_{i,1}$ where $c_{i,1}\in R=I^{0}$ (note that if $r\in R$ then by $r$ in $S$ we always mean $(r+I^{n})_{n\geqslant1}$). For each $d\geqslant1$, we have $a_{d+1}-a_{d}\in I^{d}=f_{1}I^{d-1}+\ldots+f_{k}I^{d-1}$ and so $a_{d+1}-a_{d}=\sum\limits_{i=1}^{k}f_{i}c_{i,d}$ where $c_{i,d}\in I^{d-1}$. For each $i$ ($1\leqslant i\leqslant k$) and for each $n\geqslant1$ setting $b_{i,n}:=c_{i,1}+\sum\limits_{d=1}^{n}c_{i,d}$. Then it is clear that $y_{i}:=(b_{i,n}+I^{n})_{n\geqslant1}\in S$ for all $i$, and in $S$ we have $x=\sum\limits_{i=1}^{k}f_{i}y_{i}\in IS$. Now assume the assertion holds for a given $m\geqslant1$ (the induction hypothesis), we prove it for $m+1$. Take $x\in\Ker(S\rightarrow R/I^{m+1})$ then $x\in\Ker(S\rightarrow R/I^{m})=I^{m}S$. Since $I$ is finitely generated, $I^{m}=(g_{1},\ldots,g_{t})$ is also finitely generated. Then we may write $x=\sum\limits_{i=1}^{t}g_{i}s_{i}$ where $s_{i}\in S$. Since $S/IS\simeq R/I$, for each $i$ there exists some $r_{i}\in R$ such that $s_{i}-r_{i}\in IS$. It follows that $x-\sum\limits_{i=1}^{t}g_{i}r_{i}=
\sum\limits_{i=1}^{t}g_{i}(s_{i}-r_{i})\in I^{m+1}S\subseteq\Ker(S\rightarrow R/I^{m+1})$. Then $\sum\limits_{i=1}^{t}g_{i}r_{i}\in\Ker(S\rightarrow R/I^{m+1})$ and so $\sum\limits_{i=1}^{t}g_{i}r_{i}\in I^{m+1}$. This shows that $x\in I^{m+1}S$. This completes the induction argument. \\
Thus we have natural isomorphisms of rings $S/I^{n}S\simeq R/I^{n}$ for all $n\geqslant1$. Then we obtain an isomorphism of rings $g:\lim\limits_{\overleftarrow{n\geqslant1}}S/(IS)^{n}
=\lim\limits_{\overleftarrow{n\geqslant1}}S/I^{n}S
\rightarrow\lim\limits_{\overleftarrow{n\geqslant1}} R/I^{n}=S$. It can be easily seen that $gf$ is the identity map where $f: S\rightarrow
\lim\limits_{\overleftarrow{n\geqslant1}}S/(IS)^{n}$ is the natural ring map. Then $f=g^{-1}$. This shows that $S$ is complete with respect to the $IS$-adic topology.
\end{proof}

We also provide a simple proof for the following known result: 

\begin{lemma} If $M$ is a maximal ideal of a ring $R$ then $S=\lim\limits_{\overleftarrow{n\geqslant1}}R/M^{n}$ is  a local ring with the maximal ideal $M':=\Ker(S\rightarrow R/M)$ and with the residue field $R/M$.
\end{lemma}

\begin{proof} To see locality, take $x:=(a_n +M^n) \in S\setminus M'$. Then $a_n \in R\setminus M$ for all $n\geqslant1$. Since $R/M^n$ is a local ring with the maximal ideal $M/M^n$, there exists some $b_n \in R$ such that $1-a_n b_n \in M^n$ for all $n$. Then $xy=1$ where $y:=(b_n +M^n)$. To complete the proof we need to show that $y\in S$. Since $M$ is a maximal ideal, so $M^n$ is a primary ideal of $R$ for all $n\geqslant1$. We also have $a_n \notin\sqrt{M^n}=M$. Hence, to prove $b_{n+1}-b_n \in M^n$, it suffices to show that $a_n(b_{n+1}-b_n) \in M^n$. We have $a_n(b_{n+1}-b_n)=(a_n -a_{n+1})b_{n+1}+(a_{n+1}b_{n+1}-1)+(1-a_n b_n) \in M^n$ by considering that $a_{n+1}b_{n+1}-1 \in M^{n+1}\subseteq M^n$. This shows that $S$ is a local ring with the maximal ideal $M'$. It is clear that $S/M'\simeq R/M$.
\end{proof}

Now, the observation reads as follows:

\begin{corollary} If $M$ is a finitely generated maximal ideal of a ring $R$ and $S=\lim\limits_{\overleftarrow{n\geqslant1}}R/M^{n}$, then the following exact sequence of Abelian groups: $$\xymatrix{1\ar[r]&1+MS\ar[r]& S^{\ast}\ar[r]&(R/M)^{\ast}
\ar[r]&1}$$ is split. In particular, we have an isomorphism of groups $S^{\ast}\simeq (1+MS) \times (R/M)^{\ast}$.
\end{corollary}

\begin{proof} By the above lemmas, $S$ is a complete local ring with the maximal ideal $MS$ and with the residue field $R/M$. Then apply Corollary \ref{Coro 2}.
\end{proof}

In particular, if $p$ is a prime number then in the ring of $p$-adic integers $\mathbb{Z}_{p}=
\lim\limits_{\overleftarrow{n\geqslant1}}\mathbb{Z}/p^{n}$ we have $\mathbb{Z}_{p}^{\ast}\simeq(1+p\mathbb{Z}_{p})
\times\mathbb{F}_{p}^{\ast}$.

We observed that if $M$ is a finitely generated maximal ideal of a ring $R$, then $S=\lim\limits_{\overleftarrow{n\geqslant1}}R/M^{n}$ is a complete local ring with the maximal ideal $M'=\Ker(S\rightarrow R/M)$. However, if $M$ is not finitely generated, then the local ring $(S,M')$ is not necessarily complete with respect to the $M'$-adic topology (see e.g. \cite[Tag 05JA]{Johan}).

\begin{example}\label{Example excellent 3} We show with an example that Theorem \ref{Theorem 2} cannot be generalized further to non-maximal ideals. More precisely, we first give an example of an ideal $I$ of a (complete local) ring $R$ such that $R$ is complete with respect to the $I$-adic topology and $\Char(R)=\Char(R/I)$, but the natural surjective ring map $R\rightarrow R/I$ does not split. Next, we give an example of an ideal $I$ of a (complete local) ring $R$ such that $R$ is complete with respect to the $I$-adic topology and $\Char(R)\neq\Char(R/I)$, but the natural surjective group map $R^{\ast}\rightarrow (R/I)^{\ast}$ does not split. \\
First of all, note that if a ring $R$ is complete with respect to the $I$-adic topology for some ideal $I$ of $R$, then it can be seen that $I$ is contained in the Jacobson radical of $R$. In particular, if a ring $R$ is complete with respect to the $M$-adic topology for some maximal ideal $M$ of $R$, then $R$ is a complete local ring, and for complete local rings we proved that the above splittings already exist. Hence, to find the above examples, the ideal $I$ must be a non-maximal ideal. \\
To this end, take the ideal $I=(t^{2})/(t^{3})$ in the (complete local) ring $R= k[t]/(t^{3})$ with $k$ a field. It is clear that $R/I\simeq k[t]/(t^{2})$ and
$\Char(R)=\Char(k)=\Char(R/I)$. Also $R$ is complete with respect to the $I$-adic topology, since $I^{2}=0$. But the natural surjective ring map $R=k[t]/(t^{3})\rightarrow R/I=k[t]/(t^{2})$ does not split, because the element $t+(t^{2})$ in $R/I$ has no lifting (in $R$) with square zero. \\ 
As another example from the first type, take (the complete local) ring $R=k[[t]]$ and $I=(t^{2})$. Then $R$ is complete with respect to the $I$-adic topology and $\Char(R)=\Char(R/I)$, but the natural ring map $R\rightarrow R/I$ does not split. \\
For the second type of example, for a prime number $p$, consider the ring of $p$-adic integers $\mathbb{Z}_{p}$ which is a complete local ring of mixed characteristic, i.e., $\Char(\mathbb{Z}_{p})=0$ but the reside field of $\mathbb{Z}_{p}$ is $\mathbb{F}_{p}$, and take the ideal $I=p^{2}\mathbb{Z}_{p}$ where by $p^{2}$ we mean the element $(p^{2}+p^{n}\mathbb{Z})_{n\geqslant1}$ of $\mathbb{Z}_{p}$. By Lemma \ref{Lemma 12 2026}, we have $\mathbb{Z}_{p}/I \simeq\mathbb{Z}/p^2$.
Then the group of units of $\mathbb{Z}_{p}/I$ is of order $p(p-1)$ and by Cauchy theorem has an element of order $p$ (it can be also directly seen that $1+p$ is an element of $(\mathbb{Z}/p^2)^{\ast}$ of order $p$). But for $p\geqslant3$ and for each $n\geqslant1$, by Gauss theorem, $(\mathbb{Z}/p^n)^{\ast}$ is a cyclic group of order $p^{n-1}(p-1)$ and so there is a natural isomorphism of groups $(\mathbb{Z}/p^n)^{\ast}\simeq\mathbb{Z}/(p-1)
\times\mathbb{Z}/p^{n-1}$, thus the group $(\mathbb{Z}_p)^{\ast}=
\lim\limits_{\overleftarrow{n\geqslant1}}
(\mathbb{Z}/p^{n})^{\ast}\simeq\mathbb{Z}/(p-1)
\times(\lim\limits_{\overleftarrow{n\geqslant1}}
\mathbb{Z}/p^{n-1})
\simeq\mathbb{F}_{p}^{\ast}
\times\mathbb{Z}_p$ has no nontrivial $p$-torsion, so there is no splitting in this case.    
\end{example}

\textbf{Acknowledgments.} The author would like to give sincere thanks to Professors Bhargav Bhatt, Brian Conrad, Pierre Deligne, and Melvin Hochster who generously shared their very valuable and excellent ideas with us during the writing of this article.

\end{document}